\newtheorem{Th}{Theorem}[section]
\newtheorem{Lem}[Th]{Lemma}
\newtheorem{Rem}[Th]{Remark}
    \newcommand{\Rmnum}[1]{\expandafter\@slowromancap\romannumeral #1@}
\newcommand{\cC}{{\mathcal C}}
\newcommand{\cF}{{\mathcal F}}
\newcommand{\cI}{{\mathcal I}}
\newcommand{\cJ}{{\mathcal J}}
\newcommand{\J}{{\mathcal J}}
\newcommand{\cN}{{\mathcal N}}
\newcommand{\Ga}{\Gamma}
\newcommand{\weakto}{\rightharpoonup}
\newcommand{\R}{\mathbb{R}}
\newcommand{\Z}{\mathbb{Z}}
\newcommand{\supp}{\mathrm{supp}\,}
\numberwithin{equation}{section}
\DeclareMathOperator*{\essinf}{ess\,inf}
\begin{document}

\title{The fractional Schr\"odinger equation with Hardy-type potentials and sign-changing nonlinearities}

\author{Bartosz Bieganowski}
\ead{bartoszb@mat.umk.pl}
\address{Nicolaus Copernicus University, Faculty of Mathematics and Computer Science, ul. Chopina 12/18, 87-100 Toru\'n, Poland}

\begin{abstract} 
We look for solutions to a fractional Schr\"odinger equation of the following form
$$
(-\Delta)^{\alpha / 2} u + \left( V(x) - \frac{\mu}{|x|^{\alpha}} \right) u = f(x,u)-K(x)|u|^{q-2}u\hbox{ on }\R^N \setminus \{0\},
$$
where $V$ is bounded and close-to-periodic potential and $- \frac{\mu}{|x|^{\alpha}}$ is a Hardy-type potential. We assume that $V$ is positive and $f$ has the subcritical growth but not higher than $|u|^{q-2}u$. If $\mu$ is positive and small enough we find a ground state solution, i.e. a critical point of the energy being minimizer on the Nehari manifold. If $\mu$ is negative we show that there is no ground state solutions. We are also interested in an asymptotic behaviour of solutions as $\mu \to 0^+$ and $K \to 0$.
\end{abstract}

\begin{keyword}
ground state \sep variational methods \sep Nehari manifold \sep fractional Schr\"odinger equation \sep periodic and localized potentials \sep Hardy inequality \sep sign-changing nonlinearity

\MSC[2010] 35Q55 \sep 35A15 \sep 35J20 \sep 35R11 \sep 58E05 
\end{keyword}

\maketitle

\section{Introduction}
\setcounter{section}{1}

We consider the following nonlinear, fractional Schr\"odinger equation with external, Hardy-type potential
\begin{eqnarray}
\label{eq}
(-\Delta)^{\alpha / 2} u + \left( V(x) - \frac{\mu}{|x|^{\alpha}} \right) u = f(x,u)-K(x)|u|^{q-2}u\hbox{ on }\R^N \setminus \{0\}
\end{eqnarray}
where $\alpha \in (0,2)$, $\mu \in \R $ and $N > \alpha$, with $ u \in H^{\alpha / 2}(\R^N)$. The fractional Schr\"odinger equation arises in many models from mathematical physics, e.g. nonlinear optics, quantum mechanics, nuclear physics (see e.g. \cite{deOliveira, GuoXu, Kowalski, Longhi, Luchko, Robinett, Stickler, Tare, ZhangLiu, ZhangZhong} and references therein). We focus on the external potential of the form $V(x) - \frac{\mu}{|x|^\alpha}$, where $V \in L^\infty (\R^N)$ is close-to-periodic potential and $- \frac{\mu}{|x|^\alpha}$ is Hardy-type potential. Note that the Hardy-type potential does not belong to the Kato's class, hence it is not a lower order perturbation of the operator $-\Delta+V(x)$ (see \cite{ReedSimon}).

The fractional Laplacian can be defined via Fourier multiplier $|\xi|^\alpha$, i.e. the operator $(-\Delta)^{\alpha / 2}$, for a function $\psi : \R^N \rightarrow \R$, is given by the Fourier transform by the formula
$$
\cF \left( (-\Delta)^{\alpha / 2} \psi \right) (\xi) := | \xi |^\alpha \hat{\psi} (\xi),
$$
where
$$
\cF \psi (\xi) := \hat{\psi} (\xi) := \int_{\R^N} e^{- i \xi \cdot x} \psi(x) \, dx
$$
denotes the usual Fourier transform. When $\psi : \R^N \rightarrow \R$ is rapidly decaying smooth function, it can be defined by the principal value of the singular integral
\begin{equation}
(-\Delta)^{\alpha / 2} \psi (x) = c_{N, \alpha} P.V. \int_{\R^N} \frac{\psi(x) - \psi(y)}{|x-y|^{N+\alpha}} \, dy,
\end{equation}
where 
$$
c_{N,\alpha} := \frac{2^\alpha \Ga \left( \frac{N+\alpha}{2} \right)}{2 \pi^{N/2} | \Ga (-\alpha / 2) |} > 0.
$$ 
Here, $\Gamma$ denotes the Gamma function, i.e. a function defined for complex numbers $z$ with $\mathrm{Re}(z) > 0$ by the formula
$$
\Gamma(z) := \int_0^\infty x^{z-1} e^{-x} \, dx
$$
and extended to a meromorphic function on the set $\mathbb{C} \setminus \{0, -1, -2, \ldots \}$. Both definitions of the fractional Laplacian are equivalent, i.e. on $L^2(\R^N)$ they give operators with common domain and they coincide on this domain (see \cite{Kwasnicki}). It is known that $(-\Delta)^{\alpha / 2}$ reduces to $-\Delta$ as $\alpha \to 2^-$ (see \cite{DiNezza}). In this paper we identify $(-\Delta)^{\alpha / 2}$ with the classical Laplace operator $-\Delta$ for $\alpha = 2$. In what follows we will use the following characterization of the fractional Sobolev space, for $0 < \alpha < 2$:
$$
H^{\alpha / 2} ( \R^N) := \left\{ u \in L^2 (\R^N) \ : \ \iint_{\R^N \times \R^N} \frac{|u(x)-u(y)|^2}{|x-y|^{N+\alpha}} \, dx \, dy + \int_{\R^N} |u(x)|^2 \,dx < \infty \right\}
$$
with the associated scalar product:
$$
H^{\alpha / 2} ( \R^N) \times H^{\alpha / 2} ( \R^N) \ni (u,v) \mapsto \iint_{\R^N \times \R^N} \frac{(u(x)-u(y))(v(x)-v(y))}{|x-y|^{N+\alpha}} \, dx \, dy + \int_{\R^N} u(x)v(x) \,dx \in \R.
$$
See e.g. \cite{Cabre, DiNezza} for more background about the fractional Laplace operator and fractional Sobolev spaces.

Equation (\ref{eq}) describes the behaviour of the so-called standing wave solutions $\Phi(x,t)=u(x)e^{-i\omega t}$ of the following time-dependent fractional Schr\"odinger equation
$$
i \frac{\partial \Phi}{\partial t} = (-\Delta)^{\alpha / 2} \Phi + \left(V(x) - \frac{\mu}{|x|^\alpha} +\omega\right) \Phi - g(x, |\Phi|).
$$
Such an equation was introduced by Laskin by expanding the Feynman path integral from the Brownian-like to the L\'evy-like quantum mechanical paths (see \cite{Laskin2000, Laskin2002}). The time-dependent equation is also intensively studied (see e.g. \cite{Guedes, LiemertKienle}).

The nonlinearity $f$ satisfies the following conditions:
\begin{itemize}
\item[(F1)] $f :\R^N\times\R \to \R$ is measurable, $\Z^N$-periodic in $x\in\R^N$ and continuous in $u \in\R$ for a.e. $x\in\R^N$, moreover there are $c >0$ and $2< q < p < 2^*_{\alpha} := \frac{2N}{N-\alpha}$ such that
$$|f(x,u)| \leq c (1+|u|^{p-1})\hbox{ for all }u \in\R,\; x\in\R^N,$$
\item[(F2)] $f(x,u)=o(|u|)$ uniformly in $x$ as $|u|\to 0^+$,
\item[(F3)] $F(x,u)/|u|^{q}\to\infty$ uniformly in $x$ as $|u|\to\infty$,
\item[(F4)] $u \mapsto f(x,u)/|u|^{q-1}$ is increasing on $(-\infty,0)$ and on $(0, \infty)$.
\end{itemize} 
Note that our conditions imply that for every $\varepsilon > 0$ there is $C_\varepsilon > 0$ such that
\begin{equation}\label{eps}
|f(x,u)u| \leq \varepsilon |u|^2 + C_\varepsilon |u|^p.
\end{equation}

To study the asymptotic behaviour of solutions we need additionaly the following assumption:
\begin{itemize}
\item[(F5)] for a.e. $x \in \R^N$ the function $f(x, \cdot)$ is of $\cC^1$-class and there are $2 < r \leq p$ and $c > 0$ such that
$$
f(x,u)u - 2 F(x,u) \geq b |u|^r \quad \mbox{and} \quad |f_u ' (x,u)| \leq c (1 + |u|^{p-2})
$$
for all $u \in \R$ and a.e. $x \in \R^N$.
\end{itemize}

We impose on $K$ the following condition: 
\begin{itemize}
\item[($K$)] $K \in L^{\infty}(\R^N)$ is $\Z^N$-periodic in $x\in\R^N$, $K(x)\geq 0$ for a.e. $x\in\R^N$.
\end{itemize} 

In what follows the external potential $V$ satisfies:
\begin{itemize}
\item[($V$)] $V \in L^{\infty}(\R^N)$ is the sum $V = V_{loc} + V_{per}$, where $V_{per} \in L^\infty(\R^N)$ is $\Z^N$-periodic in $x\in\R^N$ and $V_{loc}(x) \to 0$ as $|x|\to\infty$; moreover 
$$
V_0 := \essinf_{x\in\mathbb{R}^N} V(x) > 0.
$$
Moreover $V_{loc} \in L^s (\R^N)$, where $s \geq \frac{N}{\alpha}$.
\end{itemize} 
In view of ($V$) the following norm
\begin{equation}\label{Eq:Norm}
\| u\|^2 := c_{N,\alpha} \iint_{\R^N \times \R^N} \frac{|u(x)-u(y)|^2}{|x-y|^{N+\alpha}} \, dx \, dy + \int_{\R^N} V(x) |u(x)|^2 \,dx
\end{equation}
is equivalent to the classic one in $H^{\alpha / 2}(\R^N)$. Moreover, if $V_{loc} = 0$, $\| \cdot \|$ is $\mathbb{Z}^N$-invariant, i.e. 
$$
\| u \| = \| u (\cdot - z ) \|
$$
for any $z \in \mathbb{Z}^N$.

Recall that $u \in H^{\alpha / 2}(\R^N)$ is a {\it weak solution} to \eqref{eq} if for every function $\varphi \in \cC_0^\infty (\R^N)$ there holds
\begin{align*}
c_{N,\alpha} \iint_{\R^N\times \R^N} \frac{(u(x)-u(y))(\varphi(x)-\varphi(y))}{|x-y|^{N+\alpha}} \, dx \, dy &+ \int_{\R^N} V(x) u(x) \varphi(x) \, dx - \mu \int_{\R^N} \frac{u(x)\varphi(x)}{|x|^\alpha} \, dx \\ &- \int_{\R^N} \left( f(x,u)\varphi - K (x) |u|^{q-2}u\varphi \right) \, dx = 0.
\end{align*}
It is classical to check that the functional $\J:H^{\alpha / 2} (\R^N)\to\R$ given by
$$
\J(u):= \frac{c_{N,\alpha}}{2} \iint_{\R^N \times \R^N} \frac{|u(x)-u(y)|^2}{|x-y|^{N+\alpha}} \, dx \, dy + \frac12 \int_{\R^N} V(x)|u(x)|^2 \,dx - \frac12 \int_{\R^N} \frac{\mu}{|x|^{\alpha}} |u(x)|^2 \, dx - \cI(u),
$$
where
$$
\cI(u) := \int_{\R^N} \Big(F(x,u(x))- \frac{1}{q}K(x)|u(x)|^{q}\Big)\, dx,
$$
is of $\cC^1$-class and critical points of $\cJ$ are solutions of \eqref{eq}. We are looking for a {\em groud state} solution, i.e. a critical point being a minimizer of $\J$ on the Nehari manifold 
\begin{equation}\label{Eq:NehariManifold}
\cN:=\{u\in H^{\alpha / 2} (\R^N) \setminus\{0\}:\; \J'(u)(u)=0\}.
\end{equation}
Obviously $\cN$ contains all nontrivial critical points, hence a ground state is the least energy solution.

The classical Schr\"odinger equation (the case $\alpha = 2$) has been studied by many authors; see for instance \cite{AlamaLi, BieganowskiMederski, BuffoniJeanStuart, CotiZelati, dAveniaMederski, KryszSzulkin, Rabinowitz:1992, Mederski, MederskiTMNA2014, Tang, Tang2} and references therein. In the local case, the Schr\"odinger equation appears also as an approximation of the Maxwell equation. The fractional case has been also widely investigated in \cite{Bieganowski, doO, Binlin, He, Ros, Ambrosio, Davila, Fall, Davila2, Fall2, Frank2, Shang, Torres}; see also references therein.

There is a lot of results concerning the case $\Gamma = 0$ and $\mu = 0$. The existence of nontrivial solutions was obtained by S. Secchi in \cite{secchi} for a subcritical $f \in \cC^1 (\R^N \times \R)$ satisfying the Ambrosetti-Rabinowitz type condition $0 < \mu F(x,u) < u f(x,u)$ for $\mu > 2$ and a coercive potential $V \in \cC^1 (\R^N)$. The Nehari manifold method was also introduced in \cite{secchi} with the classical monotonicity condition: $t \mapsto t^{-1} u f(x, tu)$ is increasing on $(0, \infty)$. The variational setting for fractional equations was also provided in \cite{ServadeiValdinoci}. M. Cheng proved in \cite{Cheng} that (\ref{eq}) has a nontrivial solution for the subcritical nonlinearity $f(x,u) = |u|^{p-1}u + \omega u$ and a coercive potential $V(x) > 1$ for a.e. $x \in \R^N$. He showed also that there is a ground state solution being minimizer on the Nehari manifold for $0 < \omega < \lambda$, where $\lambda = \inf \sigma (\mathcal{A})$ and $\sigma (\mathcal{A})$ is the spectrum of the self-adjoint operator $\mathcal{A} := (-\Delta)^{\alpha / 2} + V(x)$ on $L^2 (\R^N)$. For $f(x,u) = |u|^{p-1}$, where $p$ is a subcritical exponent, there is a positive and spherically symmetric solution (see \cite{Dipierro}). The uniqueness of ground states $Q=Q(|x|)\geq 0$ of an equation $(-\Delta)^{\alpha / 2} Q + Q - Q^{\beta + 1} = 0$ in $\R$ was obtained by R.L. Frank and E. Lenzmann in \cite{Frank}. Recently, S. Secchi proved the existence of radially symmetric solution of $(-\Delta)^{\alpha / 2} u + V(x) u = g(u)$ for $g$ which does not satisfy the Ambrosetti-Rabinowitz condition (\cite{secchiTMNA}). Such a result was known before for $\alpha = 2$ and constant potentials $V$ (\cite{BerestyckiLions}). 

The local case $\alpha = 2$ with $V_{loc} = 0$, $\Gamma = 0$ and $q=2$ has been studied by Q. Guo and J. Mederski (see \cite{GuoMederski}) using Cerami sequences under more general assumption on $V=V_{per}$ that zero lies in the spectral gap of $-\Delta + V(x)$. 

In our case the nonlinear term depends on $x$, does not satisfy the Ambrosetti-Rabinowitz condition and the classical monotonicity condition is violated, moreover the potential is singular and the term $-\frac{\mu}{|x|^\alpha}$ does not belong to the Kato's class. Our main tool, which allows us to deal with such potentials, is the fractional Hardy inequality. This fact has been intensively studied by many authors (see for instance \cite{BogdanDyda, Dyda, DydaV, FrankSeiringer, FrankSeiringer2}). To deal with the sign-changing behaviour we use the variational setting based on the Nehari manifold technique from \cite{BieganowskiMederski}.

Now we state our main results concerning the existence of ground state solutions depending on the sign of $V_{loc}$ and the sign of $\mu < \mu^*$, where $\mu^*$ is a constant defined below.

\begin{Th}\label{ThMain1}
Suppose that ($V$), ($K$) (F1)--(F4) are satisfied, $V_{loc} \equiv 0$ or $V_{loc} < 0$, and $0 \leq \mu < \mu^*$. Then \eqref{eq} has a ground state, i.e. there is a nontrivial critical point $u$ of $\J$ such that $\J(u)=\inf_{\cN}\J$. 
\end{Th}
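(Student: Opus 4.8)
The plan is to produce the ground state as a minimizer of $\J$ on the Nehari manifold $\cN$, splitting the work into a structural part (the geometry of $\cN$) and a compactness part (extraction of a convergent minimizing sequence). I would first fix the functional setting. The sharp fractional Hardy inequality provides the constant $\mu^*$ with $\mu^*\int_{\R^N}\frac{|u(x)|^2}{|x|^{\alpha}}\,dx\le c_{N,\alpha}\iint_{\R^N\times\R^N}\frac{|u(x)-u(y)|^2}{|x-y|^{N+\alpha}}\,dx\,dy$ for all $u\in H^{\alpha/2}(\R^N)$, so that for $0\le\mu<\mu^*$ the quadratic form
$$
Q(u):=\|u\|^2-\mu\int_{\R^N}\frac{|u(x)|^2}{|x|^{\alpha}}\,dx
$$
is positive definite and, since $V\ge V_0>0$, equivalent to $\|\cdot\|^2$. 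Thus $\J(u)=\tfrac12 Q(u)-\cI(u)$ is a well-defined $\cC^1$-functional and every estimate below may be run in the equivalent norm $Q(\cdot)^{1/2}$.

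Next, the structural part. Using (F1)--(F4) together with \eqref{eps} and $K\ge0$, I would show in the spirit of \cite{BieganowskiMederski} that for each $u\ne0$ the fibre $t\mapsto\J(tu)$ has a unique positive critical point $t(u)$, which is its global maximum, so that $t(u)u\in\cN$, that $u\mapsto t(u)$ is continuous, and that $\cN$ is a $\cC^1$-manifold which is a natural constraint for $\J$. Here (F4) is exactly the monotonicity making $t(u)$ unique despite the sign-changing term $-K|u|^{q-2}u$, and it also yields the pointwise inequality $\tfrac1q f(x,u)u-F(x,u)\ge0$; combined with $\J'(u)(u)=0$ this gives, for $u\in\cN$, the representation
$$
\J(u)=\Big(\tfrac12-\tfrac1q\Big)Q(u)+\int_{\R^N}\Big(\tfrac1q f(x,u)u-F(x,u)\Big)\,dx,
$$
in which the $K$-terms cancel and the integrand is nonnegative. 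From this one reads off that $c:=\inf_{\cN}\J>0$ and that $\J$ is coercive on $\cN$, and, via the homeomorphism of $\cN$ with the unit sphere, that there is a minimizing sequence $(u_n)\subset\cN$ which is simultaneously a Palais--Smale sequence, $\J(u_n)\to c$ and $\J'(u_n)\to0$.

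The core of the proof is compactness. The sequence $(u_n)$ is bounded by coercivity, and a Lions-type vanishing lemma shows that either $u_n\to0$ in $L^p(\R^N)$ --- which by \eqref{eps} forces $\cI(u_n)\to0$ and contradicts $c>0$ --- or there are $z_n\in\Z^N$ and $R,\delta>0$ with $\int_{B(z_n,R)}|u_n|^2\,dx\ge\delta$. If the problem is genuinely $\Z^N$-periodic, namely $V_{loc}\equiv0$ and $\mu=0$, then by the $\Z^N$-invariance of $\|\cdot\|$ and the periodicity of $f,K$ the translates $u_n(\cdot-z_n)$ still minimize and converge weakly to some $u\ne0$. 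Otherwise a localized perturbation is present and the decisive point, where $\mu\ge0$ and $V_{loc}\le0$ enter, is to exclude $|z_n|\to\infty$: escape of mass to infinity would, since $-\mu/|x|^\alpha$ and $V_{loc}$ vanish there, yield $c\ge c_\infty:=\inf_{\cN_\infty}\J_\infty$ for the periodic limit functional $\J_\infty$ (with $V=V_{per}$, $\mu=0$), whereas testing $\J$ on the Nehari projection of a periodic ground state $w$ gives, from the nonpositive contributions $-\tfrac{\mu}{2}\int_{\R^N}|w|^2/|x|^\alpha\,dx$ and $\tfrac12\int_{\R^N}V_{loc}|w|^2\,dx$ (at least one strictly negative under the hypotheses), the strict inequality $c\le\J(t(w)w)<c_\infty$. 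This contradiction keeps $(z_n)$ bounded, so again $u_n\rightharpoonup u\ne0$. Since $\J'(u_n)\to0$ and $\J'$ passes to the weak limit when tested against $\varphi\in\cC_0^\infty(\R^N)$ (whose compact support allows local compactness of the embedding to handle $\int f(x,u_n)\varphi\,dx$), we conclude $\J'(u)=0$, hence $u\in\cN$ and $\J(u)\ge c$. Finally, applying the representation above and Fatou's lemma to the nonnegative integrand, together with weak lower semicontinuity of $Q$, gives $\J(u)\le\liminf_n\J(u_n)=c$, so $\J(u)=c$ and $u$ is the desired ground state.

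I expect the main obstacle to be the compactness step, and within it the strict energy inequality $c<c_\infty$: the absence of the Ambrosetti--Rabinowitz condition and of the classical monotonicity condition rules out the usual convexity and dilation shortcuts, so one must exploit the precise Nehari structure and the sign hypotheses $\mu\ge0$, $V_{loc}\le0$ to prevent mass from leaking to spatial infinity, while simultaneously controlling the sign-changing nonlinearity $-K|u|^{q-2}u$ throughout the limiting argument.
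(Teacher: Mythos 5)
Your framework coincides with the paper's up to the compactness step: the norm equivalence for $0\le\mu<\mu^*$ is Lemma \ref{norm-eqv}, the Nehari structure, $c>0$, coercivity and the bounded minimizing Palais--Smale sequence are exactly conditions (J1)--(J4) fed into Theorem \ref{ThSetting}, the strict comparison $c<c_\infty$ by projecting a periodic ground state onto $\cN$ is the paper's own argument in the case $V_{loc}<0$, and your closing step --- Fatou plus weak lower semicontinuity of $Q$ applied to the representation $\J(u)=(\tfrac12-\tfrac1q)Q(u)+\int_{\R^N}(\tfrac1q f(x,u)u-F(x,u))\,dx$, in which the $K$-terms cancel on $\cN$ --- is a correct and even slightly slicker alternative to the paper's use of item $(e)$ of its decomposition. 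The genuine gap is the exclusion of escaping concentration centers, i.e.\ your assertion that $|z_n|\to\infty$ forces $c\ge c_\infty$. Nonvanishing hands you \emph{one} sequence of centers, but the minimizing sequence may simultaneously retain a nontrivial untranslated weak limit $u_0\neq 0$, or several bumps; the escaping bump then contributes $c_\infty$ to the energy only after an exact splitting of $\J(u_n)$ between $u_0$ and the translates --- a Brezis--Lieb-type bookkeeping, which is precisely the profile decomposition (Theorem \ref{ThDecomposition}, Steps 4--6) your plan is trying to bypass. A one-sided Fatou bound cannot substitute for it: after translating, $Q(u_n)=\|v_n\|_{per}^2+\int_{\R^N} V_{loc}(x+z_n)v_n^2\,dx-\mu\int_{\R^N}\frac{v_n^2}{|x+z_n|^\alpha}\,dx$, and the two correction terms are \emph{nonpositive}, so they point the wrong way for lower semicontinuity; they must be shown to vanish, which fails if mass remains near the origin.

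The second, Hardy-specific failure point is that even in the clean subcase $u_n\rightharpoonup 0$, the term $\mu\int_{\R^N}\frac{u_n^2}{|x|^\alpha}\,dx$ does \emph{not} tend to $0$ by weak nullity alone: the weight $|x|^{-\alpha}$ just misses the local H\"older/compactness window (it is not in $L^{N/\alpha}_{loc}$), and the concentrating sequence $u_n=n^{(N-\alpha)/2}\phi(nx)$ is bounded in $H^{\alpha/2}(\R^N)$, weakly null, yet has constant Hardy integral. So "$-\mu/|x|^\alpha$ vanishes at infinity'' does not let you replace $\J$ by the periodic limit functional along the sequence. The paper neutralizes exactly this through the strict gap $\mu<\mu^*$ used as a \emph{coercivity} device, not merely for norm equivalence: whenever the Palais--Smale structure yields $\|\xi_n\|^2=\mu\int_{\R^N}\frac{\xi_n^2}{|x|^\alpha}\,dx+o(1)$, Lemma \ref{norm-eqv} gives $D_{N,\alpha,\mu}\|\xi_n\|^2\le\|\xi_n\|^2-\mu\int_{\R^N}\frac{\xi_n^2}{|x|^\alpha}\,dx=o(1)$ (Steps 2 and 4(1) of Theorem \ref{ThDecomposition}), and Lemma \ref{hardyLemma} handles the Hardy term against translated test functions. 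To close your argument you must either import these steps wholesale or reproduce this mechanism inside your dichotomy; as written, "escape of mass yields $c\ge c_\infty$'' is asserted rather than proved, and it is the heart of the theorem. (A partial repair along your lines: when the untranslated weak limit satisfies $u_0\neq 0$, your Fatou ending already concludes without any translation; the unresolved case is exactly $u_0=0$ with escaping bumps.)
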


\begin{Th}\label{ThMain2}
Suppose that ($V$), ($K$), (F1)--(F4) are satisfied, $\mu < 0$ and
$$
V_{loc} (x) > \frac{\mu}{|x|^\alpha} \ \mbox{for} \ \mbox{a.e.} \ x \in \R^N \setminus \{0\},
$$
in particular $V_{loc}> 0$ can be considered. Then \eqref{eq} has no ground states.
\end{Th}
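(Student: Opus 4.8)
The plan is to compare $\J$ with the energy of the periodic \emph{limit problem} obtained by discarding the localized and Hardy terms, and to exploit that the hypothesis forces the difference $V_{loc}(x)-\mu/|x|^\alpha$ to be strictly positive. Let $\J_{per}$ be the functional associated with the purely periodic equation, i.e. $V$ replaced by $V_{per}$ and $\mu=0$,
\[
\J_{per}(u) := \frac{c_{N,\alpha}}{2}\iint_{\R^N\times\R^N}\frac{|u(x)-u(y)|^2}{|x-y|^{N+\alpha}}\,dx\,dy + \frac12\int_{\R^N}V_{per}(x)|u(x)|^2\,dx - \cI(u),
\]
with Nehari manifold $\cN_{per}$ and level $c_{per}:=\inf_{\cN_{per}}\J_{per}$. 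Since $V_{loc}(x)\to0$ at infinity forces $\essinf V_{per}\ge V_0>0$, the pair $(V_{per},0)$ satisfies ($V$) with $V_{loc}\equiv0$ and $0\in[0,\mu^*)$, so Theorem \ref{ThMain1} applies to $\J_{per}$ and produces a ground state $w\in\cN_{per}$ with $\J_{per}(w)=c_{per}>0$. The two functionals are tied by the pointwise identity
\[
\J(u) = \J_{per}(u) + \frac12\int_{\R^N}\Big(V_{loc}(x)-\frac{\mu}{|x|^\alpha}\Big)|u(x)|^2\,dx,
\]
and the hypothesis $V_{loc}(x)>\mu/|x|^\alpha$, combined with the fractional Hardy inequality (which guarantees $\int_{\R^N}|u|^2/|x|^\alpha<\infty$), shows that the last integral is finite and \emph{strictly positive} for every $u\neq0$.

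First I would prove $c:=\inf_{\cN}\J\le c_{per}$ by sliding the periodic ground state to infinity. Take $z_n\in\Z^N$, $|z_n|\to\infty$, and set $w_n:=w(\cdot-z_n)$. The periodic part is $\Z^N$-invariant, so $\J_{per}(w_n)=c_{per}$, while the perturbation $\frac12\int(V_{loc}-\mu/|x|^\alpha)|w_n|^2$ tends to $0$: writing $\int V_{loc}(x)|w(x-z_n)|^2\,dx=\int V_{loc}(y+z_n)|w(y)|^2\,dy\to0$ via $V_{loc}(x)\to0$ as $|x|\to\infty$ (with $V_{loc}\in L^s$, $s\ge N/\alpha$, controlling the tail through Hölder), and $\int|w(x-z_n)|^2/|x|^\alpha\,dx\to0$ by splitting $\R^N$ into a large ball and its complement and using $|x|^{-\alpha}\to0$ at infinity together with the Hardy bound. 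Hence $\J(w_n)\to c_{per}$. Projecting onto $\cN$ by the unique Nehari scaling $t_nw_n\in\cN$ (well defined thanks to (F3)--(F4)) gives $t_n\to1$ and $\J(t_nw_n)\to c_{per}$; since $t_nw_n\in\cN$, this yields $c\le c_{per}$.

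Next I would establish the strict inequality $\J(u)>c_{per}$ for \emph{every} $u\in\cN$. Fix such $u$, so $\J(u)=\max_{t>0}\J(tu)$ by the Nehari structure, and let $t_*>0$ be the unique maximizer of $t\mapsto\J_{per}(tu)$ (again by (F3)--(F4)). Using the identity above and strict positivity of the perturbation,
\[
\J(u)\ge\J(t_*u)=\J_{per}(t_*u)+\frac{t_*^2}{2}\int_{\R^N}\Big(V_{loc}(x)-\frac{\mu}{|x|^\alpha}\Big)|u(x)|^2\,dx>\max_{t>0}\J_{per}(tu)\ge c_{per}.
\]
Taking the infimum over $u\in\cN$ gives $c\ge c_{per}$, so together with the previous step $c=c_{per}$. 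Finally, if $c$ were attained at some $u_0\in\cN$, the strict inequality would force $c=\J(u_0)>c_{per}=c$, a contradiction; therefore \eqref{eq} admits no ground state.

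The main obstacle is the strict inequality of the third paragraph: one must verify that along the ray through an arbitrary $u\in\cN$ the full functional strictly dominates the periodic one at the periodic maximizer $t_*$, which rests on the simultaneous finiteness (via the fractional Hardy inequality) and strict positivity of $\int(V_{loc}-\mu/|x|^\alpha)|u|^2$ guaranteed by the hypothesis. The secondary technical point is the vanishing of the Hardy and localized contributions under translation to infinity, handled through the splitting into a large ball and its complement and through the integrability $V_{loc}\in L^s$ with $s\ge N/\alpha$ and the decay of $|x|^{-\alpha}$.
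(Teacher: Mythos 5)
Your proposal is correct and essentially reproduces the paper's own argument: the paper likewise derives $c_{per}\leq \cJ_{per}(tu)<\cJ(tu)\leq \cJ(u)=c$ for a putative ground state $u$ via the Nehari projection and the strict pointwise inequality $V_{loc}(x)>\mu/|x|^\alpha$, and then obtains the reverse bound $c\leq c_{per}$ by translating elements of $\cN_{per}$ to infinity, killing the $V_{loc}$ and Hardy terms exactly as you do (Lemma \ref{hardyLemma}), with boundedness of the projection parameters $t(z)$ established through (F3). The only cosmetic differences are that the paper translates arbitrary $u_{per}\in\cN_{per}$ and takes an infimum rather than using the periodic ground state, and it proves boundedness of the scaling factors where you assert $t_n\to 1$ (boundedness is all that is needed, and your (F3)--(F4) remark covers it).
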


We are also interestend in the behaviour of solutions in asymptotic cases.

\begin{Th}\label{ThAsymptoticGamma}
Suppose that ($V$), (F1)--(F5) are satisfied, $V_{loc} \equiv 0$ and $\mu=0$. Moreover assume that every function in the sequence $(K_n)$ satisfies $(K)$ and $K_n \to 0$ in $L^\infty (\R^N)$. If $u_n$ is a ground state solution of \eqref{eq} with $K \equiv K_n$, then there is a sequence $(z_n) \subset \mathbb{Z}^N$ such that
$$
u_n (\cdot - z_n) \to u_0 \quad \mathrm{in} \ H^{\alpha / 2} (\R^N),
$$
where $u_0$ is a ground state solution of \eqref{eq} with $K \equiv 0$.
\end{Th}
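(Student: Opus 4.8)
\emph{Proof proposal.} Throughout write $\J_n$, $\cN_n$ and $c_n:=\inf_{\cN_n}\J_n$ for the energy, Nehari manifold and ground state level associated with $K\equiv K_n$, and $\J_0$, $\cN_0$, $c_0$ for the limiting problem with $K\equiv0$; since $V_{loc}\equiv0$ and $\mu=0$ the norm $\|\cdot\|$ is $\Z^N$-invariant and each $\J_n$, $\J_0$ is invariant under integer translations. The plan is to: (i) prove $c_n\to c_0$; (ii) show $(u_n)$ is bounded and does not vanish, producing translations $z_n$; (iii) pass to a nontrivial weak limit $u_0$ solving the limit equation; and (iv) identify $u_0$ as a ground state and upgrade to strong convergence simultaneously, by a Brezis--Lieb decomposition.

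First I would establish $c_n\to c_0$. Since $K_n\ge0$ one has $\J_n\ge\J_0$ pointwise. For the ground state $u_n\in\cN_n$, (F3)--(F4) give a unique $s_n>0$ with $s_nu_n\in\cN_0$, so
$$c_0\le\J_0(s_nu_n)\le\J_n(s_nu_n)\le\J_n(u_n)=c_n,$$
the last inequality because $u_n$ maximizes $\J_n$ along its ray. For the reverse bound I would take a ground state $u_0$ of the limit problem (which exists by Theorem \ref{ThMain1} with $K\equiv0$), project it onto $\cN_n$ via a unique $t_n>0$, and use $K_n\to0$ in $L^\infty$ to get $t_n\to1$ and $\J_n(t_nu_0)\to\J_0(u_0)=c_0$, whence $\limsup c_n\le c_0$. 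Here $c_0>0$, since on $\cN_0$ the bound \eqref{eps} forces $\|u\|\ge\rho>0$ and then $\J_0(u)\ge(\tfrac12-\tfrac1q)\|u\|^2$.

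Next, testing $\J_n'(u_n)(u_n)=0$ and using the monotonicity consequence $\tfrac1q f(x,u)u\ge F(x,u)$ of (F4) (the $K_n$-terms cancel), I obtain $c_n=\J_n(u_n)-\tfrac1q\J_n'(u_n)(u_n)\ge(\tfrac12-\tfrac1q)\|u_n\|^2$, so $(u_n)$ is bounded; combined with \eqref{eps} and $c_n\to c_0>0$ this also yields $\liminf\|u_n\|>0$, hence $\liminf\int_{\R^N}|u_n|^p>0$. Were $(u_n)$ to vanish, a Lions-type lemma in $H^{\alpha/2}(\R^N)$ would give $u_n\to0$ in $L^p$, a contradiction; therefore there are $z_n\in\Z^N$ and $\rho,\delta>0$ with $\int_{B(z_n,\rho)}|u_n|^2\ge\delta$. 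Setting $\tilde u_n:=u_n(\cdot-z_n)$, translation invariance gives $\tilde u_n\in\cN_n$, $\J_n(\tilde u_n)=c_n$ and $\J_n'(\tilde u_n)=0$, and, up to a subsequence, $\tilde u_n\rightharpoonup u_0$ in $H^{\alpha/2}$, $\tilde u_n\to u_0$ in $L^2_{loc}$ and a.e., with $u_0\neq0$. Passing to the limit in $\J_n'(\tilde u_n)(\varphi)=0$ for $\varphi\in\cC_0^\infty(\R^N)$---using weak convergence for the quadratic part (there is no Hardy term since $\mu=0$), local compactness for the $f$-term, and $|K_n|_\infty\to0$ to kill the $K_n$-term---shows $\J_0'(u_0)=0$, so $u_0\in\cN_0$ and $\J_0(u_0)\ge c_0$.

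Finally I would obtain the ground state property and strong convergence together. Writing $w_n:=\tilde u_n-u_0\rightharpoonup0$, the Hilbert structure gives $\|\tilde u_n\|^2=\|u_0\|^2+\|w_n\|^2+o(1)$, while the Brezis--Lieb lemma (valid under the growth (F1)) splits $\int F(x,\tilde u_n)$ and $\int f(x,\tilde u_n)\tilde u_n$; as $\int K_n|\tilde u_n|^q\to0$ this yields $\J_n(\tilde u_n)=\J_0(u_0)+\J_0(w_n)+o(1)$ and $\J_0'(w_n)(w_n)=o(1)$ (using $\J_0'(u_0)(u_0)=0$). Hence $\J_0(w_n)\to c_0-\J_0(u_0)\le0$. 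On the other hand $\J_0(w_n)=(\tfrac12-\tfrac1q)\|w_n\|^2+\int(\tfrac1q f(x,w_n)w_n-F(x,w_n))+o(1)$ with both integrands nonnegative, so $\liminf\J_0(w_n)\ge0$. Therefore $\J_0(u_0)=c_0$ (so $u_0$ is a ground state of the limit problem) and $\J_0(w_n)\to0$, which forces $(\tfrac12-\tfrac1q)\|w_n\|^2\to0$, i.e. $\tilde u_n\to u_0$ in $H^{\alpha/2}(\R^N)$. The hard part will be the loss of compactness caused by translation invariance: one must both locate the concentration (the Lions alternative and the translations $z_n$) and rule out energy splitting, which the Brezis--Lieb identities achieve by showing that any positive-energy remainder would cost at least $c_0>0$. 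The coercivity in (F5), $f(x,u)u-2F(x,u)\ge b|u|^r$, affords an alternative, Fatou-based identification of $u_0$ as a ground state from $\int(\tfrac12 f(x,\tilde u_n)\tilde u_n-F(x,\tilde u_n))\to c_0$.
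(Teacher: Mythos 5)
Your proposal is correct, and its first three steps essentially coincide with the paper's own route: the two-sided comparison of levels via Nehari projections giving $c_n\to c_0$ (the paper's Lemma \ref{Lem52}), boundedness of $(u_n)$ from $c_n=\cJ_n(u_n)-\tfrac1q\cJ_n'(u_n)(u_n)\ge(\tfrac12-\tfrac1q)\|u_n\|^2$ (Lemma \ref{Lem53}), exclusion of vanishing by a fractional Lions lemma plus the uniform lower bound on Nehari elements (the paper instead contradicts the uniform sphere bound of Lemma \ref{Lem51}, but your $L^p$-based variant is equally valid), and identification of the translated weak limit as a critical point of $\cJ_0$ using $|K_n|_\infty\to0$. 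Where you genuinely diverge is the final step. The paper identifies $u_0$ as a ground state by Fatou's lemma applied to $\int(\tfrac12 f(x,w_n)w_n-F(x,w_n))\,dx$, and then proves strong convergence via (F5): with $G=\tfrac12fu-F$ it shows $\int G(x,w_n-u_0)\,dx\to0$ by a Vitali argument (differentiating $s\mapsto G(x,w_n-u_0+su_0)$, which needs the growth of $f_u'$), deduces $b|w_n-u_0|_r^r\le 2\int G(x,w_n-u_0)\,dx\to0$, interpolates, and finally tests the Euler equation against $w_n-u_0$. You instead run a Brezis--Lieb energy decomposition, obtaining $\cJ_0(\tilde u_n)=\cJ_0(u_0)+\cJ_0(w_n)+o(1)$ together with $\cJ_0'(w_n)(w_n)=o(1)$, so that $\cJ_0(w_n)\to c_0-\cJ_0(u_0)\le0$ while $\cJ_0(w_n)=(\tfrac12-\tfrac1q)\|w_n\|^2+\int(\tfrac1q f(x,w_n)w_n-F(x,w_n))\,dx+o(1)$ is asymptotically nonnegative by $fu\ge qF$; this yields $\cJ_0(u_0)=c_0$ and $\|w_n\|\to0$ in one stroke, which is arguably cleaner and makes the ``no energy splitting'' mechanism explicit. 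Two caveats, neither a genuine gap under the stated hypotheses: the splitting of the derivative term $\int f(x,\tilde u_n)\tilde u_n\,dx$ is \emph{not} the classical Brezis--Lieb lemma and cannot be justified ``under the growth (F1)'' alone --- one either needs a Carath\'eodory-type Brezis--Lieb lemma or, as is available here, (F5), since $\partial_u\bigl(f(x,u)u\bigr)=f_u'(x,u)u+f(x,u)$ then has subcritical growth and the paper's own $s$-differentiation trick from Step 6 of Theorem \ref{ThDecomposition} applies; and your assertion $t_n\to1$ for the projections of $u_0$ onto $\cN_n$ needs the (F3)-based boundedness argument of Lemma \ref{Lem52} (boundedness of $(t_n)$ already suffices for $\limsup_n c_n\le c_0$, so the claim $t_n\to1$ is more than you need).
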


\begin{Th}\label{ThAsymptoticMu}
Suppose that ($V$), ($K$), (F1)--(F5) are satisfied, $V_{loc} \equiv 0$. Moreover assume that the sequence $(\mu_n)$ satisfies $0 \leq \mu_n < \mu^*$ and $\mu_n \to 0$. If $u_n$ is a ground state solution of \eqref{eq} with $\mu = \mu_n$, then there is a sequence $(z_n) \subset \mathbb{Z}^N$ such that
$$
u_n (\cdot - z_n) \to u_0 \quad \mathrm{in} \ H^{\alpha / 2} (\R^N),
$$
where $u_0$ is a ground state solution of \eqref{eq} with $\mu = 0$.
\end{Th}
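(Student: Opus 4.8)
Write $\J_\mu$, $\cN_\mu$ and $c_\mu:=\inf_{\cN_\mu}\J_\mu$ for the energy, Nehari manifold and ground state level attached to the parameter $\mu$, and set $Q_\mu(u):=\|u\|^2-\mu\int_{\R^N}\frac{|u(x)|^2}{|x|^\alpha}\,dx$. The plan is to run a concentration--compactness argument on the sequence $(u_n)$ of ground states, in complete parallel to the proof of Theorem \ref{ThAsymptoticGamma}, using $\mu_n\to0$ to annihilate the singular Hardy term in the limit equation, and then to upgrade weak convergence to strong convergence by matching energy levels, $c_{\mu_n}\to c_0$. The easy half of this matching is the bound $c_{\mu_n}\le c_0$ for every $n$: fixing a ground state $u_0$ of the $\mu=0$ problem (which exists by Theorem \ref{ThMain1}) and the positive $t_n$ with $t_nu_0\in\cN_{\mu_n}$, one has $c_{\mu_n}\le\J_{\mu_n}(t_nu_0)\le\max_{t\ge0}\J_{\mu_n}(tu_0)\le\max_{t\ge0}\J_0(tu_0)=\J_0(u_0)=c_0$, where the last inequality uses $\J_{\mu_n}(tu_0)\le\J_0(tu_0)$, valid since $\mu_n\ge0$.

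Next I would establish two-sided bounds on $(u_n)$. From the Nehari identity $Q_{\mu_n}(u_n)=\int f(x,u_n)u_n-\int K|u_n|^q\le\int f(x,u_n)u_n$, estimating the right-hand side by \eqref{eps}, and using that $\mu_n\to0$ with $\mu_n<\mu^*$ renders $Q_{\mu_n}$ uniformly equivalent to $\|\cdot\|^2$ through the fractional Hardy inequality, one obtains $\|u_n\|\ge\delta>0$. The decomposition on the Nehari manifold
$$c_{\mu_n}=\J_{\mu_n}(u_n)=\Big(\tfrac12-\tfrac1q\Big)Q_{\mu_n}(u_n)+\Psi(u_n),\qquad \Psi(u):=\int_{\R^N}\Big(\tfrac1q f(x,u)u-F(x,u)\Big)\,dx,$$
in which the $K$-terms cancel identically and $\Psi\ge0$ by (F4), then yields both a uniform lower bound $\inf_n c_{\mu_n}>0$ and, together with $c_{\mu_n}\le c_0$, the bound $\sup_n\|u_n\|<\infty$.

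If $\sup_{y\in\R^N}\int_{B(y,1)}|u_n|^2\,dx\to0$, then $u_n\to0$ in $L^p(\R^N)$ by Lions' lemma, and \eqref{eps} with the Nehari identity forces $\|u_n\|\to0$, contradicting $\|u_n\|\ge\delta$. Hence vanishing is excluded and there are $z_n\in\Z^N$ and $R>0$ with $\liminf_n\int_{B(z_n,R)}|u_n|^2\,dx>0$. Setting $\tu_n:=u_n(\cdot-z_n)$ and using that $\|\cdot\|$, $f$, $F$ and $K$ are $\Z^N$-periodic, $(\tu_n)$ is bounded and, along a subsequence, $\tu_n\rightharpoonup\tu\ne0$ in $H^{\alpha/2}(\R^N)$. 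The translated function solves the equation with Hardy weight $\mu_n|x+z_n|^{-\alpha}$; whether $(z_n)$ remains bounded or $|z_n|\to\infty$, this term tends to $0$ when tested against any $\varphi\in\cC_0^\infty(\R^N)$, since $\mu_n\to0$ while $\int\frac{\tu_n\varphi}{|x+z_n|^\alpha}\,dx$ stays bounded. Passing to the limit in the weak formulation, with local compactness handling the subcritical nonlinear term, shows that $\tu$ is a nontrivial solution of \eqref{eq} with $\mu=0$; thus $\tu\in\cN_0$ and $\J_0(\tu)\ge c_0$.

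Finally, the $\Z^N$-invariance of $\|\cdot\|$ and of $\Psi$ gives $c_{\mu_n}=(\tfrac12-\tfrac1q)\|\tu_n\|^2+\Psi(\tu_n)+o(1)$, the $o(1)$ absorbing the vanishing Hardy contribution to $Q_{\mu_n}(u_n)$. Weak lower semicontinuity of $\|\cdot\|^2$ and Fatou's lemma applied to the nonnegative integrand of $\Psi$ yield $\liminf_n c_{\mu_n}\ge(\tfrac12-\tfrac1q)\|\tu\|^2+\Psi(\tu)=\J_0(\tu)\ge c_0$. Combined with $c_{\mu_n}\le c_0$, all inequalities are equalities: $\J_0(\tu)=c_0$, so $u_0:=\tu$ is a ground state for $\mu=0$, and, crucially, $\|\tu_n\|\to\|\tu\|$. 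In the Hilbert space $(H^{\alpha/2}(\R^N),\|\cdot\|)$ weak convergence together with convergence of norms upgrades to $\tu_n\to\tu=u_0$ strongly, which is the claim. The main obstacle is the two-fold loss of compactness: the translation invariance of the limiting periodic problem, resolved by the concentration--compactness step producing $(z_n)$, and the singular Hardy term breaking that invariance, controlled precisely because $\mu_n\to0$; the nonnegativity of $\Psi$ coming from (F4) is exactly what makes the energy $\liminf$-bound, and hence the strong convergence, go through, while (F5) supplies the finer control needed to pass to the limit in the nonlinearity and to identify $u_0$ as a genuine ground state.
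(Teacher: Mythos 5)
Your proposal is correct, and its decisive compactness step takes a genuinely different route from the paper's. The paper only sketches this theorem, deferring to the proof of Theorem \ref{ThAsymptoticGamma} and to \cite{GuoMederski}: there one first proves $c_n\to c_0$ by projecting ground states onto both Nehari manifolds and controlling the projection parameters, and then upgrades weak convergence of the translates to strong convergence by testing $\cJ_n'(u_n)$ against the difference and exploiting (F5): with $G(x,u)=\tfrac12 f(x,u)u-F(x,u)$ one shows $\int_{\R^N}G(x,w_n-u)\,dx\to 0$ by a Vitali argument, the bound $f(x,u)u-2F(x,u)\ge b|u|^r$ then forces $w_n\to u$ in $L^r(\R^N)$, and the norm identity closes the proof. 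You instead prove only the one-sided bound $c_{\mu_n}\le c_0$ a priori (using $\J_{\mu_n}(tu_0)\le\J_0(tu_0)$ for $\mu_n\ge 0$ and the maximality of the Nehari projection, which follows from (J3) and is used elsewhere in the paper), and recover the matching lower bound a posteriori from the splitting $c_{\mu_n}=\bigl(\tfrac12-\tfrac1q\bigr)Q_{\mu_n}(u_n)+\Psi(u_n)$, where $Q_\mu(u)=\|u\|^2-\mu\int_{\R^N}|x|^{-\alpha}|u|^2\,dx$ and $\Psi(u)=\int_{\R^N}\bigl(\tfrac1q f(x,u)u-F(x,u)\bigr)dx\ge 0$ by (F4), the $K$-terms cancelling identically. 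Fatou's lemma and weak lower semicontinuity then give $\J_0(\tu)=c_0$, and since the sum of the two liminfs equals the limit of the sum and $q>2$, both pieces converge separately, so $\|\tu_n\|\to\|\tu\|$ and strong convergence follows from weak convergence plus norm convergence in the Hilbert space $\bigl(H^{\alpha/2}(\R^N),\|\cdot\|\bigr)$. This buys two real simplifications: your compactness step never uses (F5) at all — your closing sentence crediting (F5) with the limit passage in the nonlinearity is a misattribution, since only (F1)--(F2) and Vitali's theorem are used there, so your argument works under weaker hypotheses (and, since the $K_n$-terms cancel in the same way, it would also streamline the proof of Theorem \ref{ThAsymptoticGamma}); and your treatment of the singular term, Cauchy--Schwarz against the translated Hardy inequality annihilated by $\mu_n\to 0$, works whether or not $|z_n|\to\infty$, whereas the paper's Lemma \ref{hardyLemma} applies only to translations escaping to infinity, a case distinction the sketch glosses over. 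Two cosmetic points: your sign conventions ($\tu_n=u_n(\cdot-z_n)$ versus the weight $|x+z_n|^{-\alpha}$) are inconsistent but harmless, and, as in the paper, the convergence is obtained along a subsequence, which is the standard reading of the statement.
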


To our best knowledge, Theorems \ref{ThMain1} and \ref{ThMain2} were known before only for $\alpha = 2$, $V_{loc} \equiv 0$, $q=2$ and $K \equiv 0$ (see \cite{GuoMederski}). Similarly as in Section \ref{sect:AsymptoticGamma} we can show Theorem \ref{ThAsymptoticGamma} in the local case ($\alpha = 2$), which is a new result. Theorem \ref{ThAsymptoticMu} has been shown before only for $\alpha = 2$, $q=2$ and $K \equiv 0$ (see \cite{GuoMederski}).

Let us briefly describe the structure of this paper. In the second section we introduce some preliminary facts - the fractional Hardy inequality and the variational setting, which allows us to deal with sign-changing nonlinearities. The third section concerns the analysis of bounded Palais-Smale sequences. In the fourth section we provide the proof of Theorem \ref{ThMain1} using preliminary facts and the decomposition of Palais-Smale sequences. The fifth section contain the proof of Theorem \ref{ThMain2}. In sixth and seventh sections we provide proofs of Theorems \ref{ThAsymptoticGamma} and \ref{ThAsymptoticMu}.

\section{Preliminary facts}
\label{sect:Preliminary}

Let us recall the fractional Hardy inequality, which is the main tool and allows us to deal with Hardy-type potentials.

\begin{Lem}[{\cite{FrankSeiringer}[Theorem 1.1]}] \label{Lem:HardyIneq}
There is $H_{N,\alpha} > 0$ such that for every $u \in H^{\alpha / 2} (\R^N)$ and $N > \alpha$ there holds
$$
\iint_{\R^N \times \R^N} \frac{|u(x)-u(y)|^2}{|x-y|^{N+\alpha}} \, dx \, dy \geq H_{N,\alpha} \int_{\R^N} \frac{|u(x)|^2}{|x|^{\alpha}} \, dx.
$$
\end{Lem}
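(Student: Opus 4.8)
The plan is to establish the stronger \emph{ground-state representation} identity, from which the stated inequality follows at once (and, as a bonus, with the sharp constant). The natural candidate for the extremal profile is the virtual ground state $\omega(x) := |x|^{-(N-\alpha)/2}$, which satisfies, in the distributional sense on $\R^N \setminus \{0\}$, the eigenvalue-type relation
$$
\mathrm{p.v.}\!\int_{\R^N} \frac{\omega(x)-\omega(y)}{|x-y|^{N+\alpha}}\,dy = \tfrac12 H_{N,\alpha}\,\frac{\omega(x)}{|x|^\alpha}
$$
for a constant $H_{N,\alpha}>0$, which will turn out to be exactly the Hardy constant. Since $\omega \notin L^2(\R^N)$ it is not itself an admissible test function; it only serves as a weight in the substitution below.

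First I would reduce, by density, to functions $u \in \cC_0^\infty(\R^N \setminus \{0\})$, for which every integral converges and the origin causes no trouble. For such $u$ I would set $v := u/\omega$ and expand the Gagliardo numerator. A direct computation gives the pointwise identity
$$
|u(x)-u(y)|^2 - \omega(x)\omega(y)\,|v(x)-v(y)|^2 = \bigl(\omega(x)-\omega(y)\bigr)\bigl(\omega(x)v(x)^2 - \omega(y)v(y)^2\bigr).
$$
Dividing by $|x-y|^{N+\alpha}$, integrating, symmetrizing in $x \leftrightarrow y$, and inserting the eigenvalue relation to evaluate the resulting inner $y$-integral yields the representation
$$
\iint_{\R^N \times \R^N} \frac{|u(x)-u(y)|^2}{|x-y|^{N+\alpha}}\,dx\,dy = H_{N,\alpha} \int_{\R^N} \frac{|u(x)|^2}{|x|^\alpha}\,dx + \iint_{\R^N \times \R^N} \frac{|v(x)-v(y)|^2}{|x-y|^{N+\alpha}}\,\omega(x)\omega(y)\,dx\,dy.
$$
The last term is manifestly nonnegative, so discarding it gives the claimed inequality; retaining it shows that $H_{N,\alpha}$ cannot be improved. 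The delicate point is justifying the reorganization of the only conditionally convergent integrals and the passage to the eigenvalue relation, which I would handle by a principal-value cutoff near the singularities $x=y$ and $x=0$ followed by a limit.

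It then remains to identify $H_{N,\alpha}$ and verify positivity. I would compute the left-hand side of the eigenvalue relation by inserting $\omega(x)=|x|^{-(N-\alpha)/2}$ and exploiting the homogeneity $\omega(\lambda x)=\lambda^{-(N-\alpha)/2}\omega(x)$ to collapse the $y$-integral to a single radial integral evaluated through the Beta function; equivalently, one may use the Fourier transform of the homogeneous distribution $|x|^{-(N-\alpha)/2}$. This produces
$$
H_{N,\alpha} = \kappa_{N,\alpha}\,\frac{\Gamma\!\left(\frac{N+\alpha}{4}\right)^2}{\Gamma\!\left(\frac{N-\alpha}{4}\right)^2}
$$
for an explicit positive factor $\kappa_{N,\alpha}$ depending only on $N$ and $\alpha$; positivity is immediate because $N>\alpha$ keeps every Gamma-argument positive. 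I expect this special-function evaluation, together with the regularization that makes the formal manipulation rigorous, to be the main obstacle.

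Finally, I would note that for the purposes of this paper only the positivity of $H_{N,\alpha}$ is needed, not its sharp value, so a softer route is also available: combining the fractional Sobolev embedding in the Lorentz-refined form $\bigl(\iint |u(x)-u(y)|^2|x-y|^{-N-\alpha}\,dx\,dy\bigr)^{1/2} \gtrsim \|u\|_{L^{2^*_{\alpha},2}}$ with the membership $|x|^{-\alpha} \in L^{N/\alpha,\infty}(\R^N)$ and Hölder's inequality in Lorentz spaces gives
$$
\int_{\R^N} \frac{|u(x)|^2}{|x|^\alpha}\,dx \lesssim \|u\|_{L^{2^*_{\alpha},2}}^2 \lesssim \iint_{\R^N \times \R^N} \frac{|u(x)-u(y)|^2}{|x-y|^{N+\alpha}}\,dx\,dy
$$
directly, at the cost of a nonoptimal constant. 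This avoids the ground-state computation entirely and is the quickest path to the inequality as stated.
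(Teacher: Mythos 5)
Your proposal is correct in outline and is essentially the paper's own route: the paper does not prove this lemma at all but imports it from Frank--Seiringer \cite{FrankSeiringer}, and the ground-state representation with $\omega(x)=|x|^{-(N-\alpha)/2}$, the pointwise identity (which checks out algebraically), and the eigenvalue relation evaluated via homogeneity and Gamma functions is precisely the method of that cited paper, with your constant consistent with the paper's $\mu^* = c_{N,\alpha}H_{N,\alpha} = 2^{\alpha}\bigl(\Gamma\bigl(\tfrac{N+\alpha}{4}\bigr)/\Gamma\bigl(\tfrac{N-\alpha}{4}\bigr)\bigr)^2$. Your closing remark is also apt: since the paper only uses positivity of $H_{N,\alpha}$ (sharpness enters solely through the definition of $\mu^*$), the softer Lorentz-space argument would suffice for the inequality itself, but the sharp constant from the ground-state representation is what the paper actually needs.
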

\noindent Moreover, in view of \cite{FrankSeiringer}, the sharp constant $H_{N,\alpha}$ can be estabilished and it is equal
$$
H_{N,\alpha} = 2 \pi^{N/2} \frac{\Ga \left( \frac{N+\alpha}{4} \right)^2  | \Ga ( - \alpha / 2)|}{\Ga \left( \frac{N-\alpha}{4} \right)^2 \Ga \left( \frac{N+\alpha}{2}  \right)}.
$$
Define 
$$
\mu^* := H_{N,\alpha} c_{N,\alpha} = 2^{\alpha} \left( \frac{\Ga \left( \frac{N+\alpha}{4} \right) }{\Ga \left( \frac{N-\alpha}{4} \right) } \right)^2.
$$
Note that in the local case ($\alpha = 2$) we obtain
$$
\mu^* = 4 \left( \frac{\Gamma \left(\frac{N}{4} - \frac{1}{2}+1\right)}{\Gamma \left(\frac{N}{4} - \frac{1}{2}\right)} \right)^2 = 4 \left( \frac{N}{4} - \frac{1}{2} \right)^2 = \frac{(N-2)^2}{4}.
$$
This same constant has been obtained in \cite{GuoMederski} and it is the sharp constant in the Hardy inequality in $H^1 (\R^N)$. 

\begin{Lem}\label{norm-eqv}
Let $0 \leq \mu < \mu^*$. There exists $1 > D_{N,\alpha,\mu} > 0$ such that for any $u \in H^{\alpha / 2} (\R^N)$ 
$$
D_{N,\alpha,\mu} \|u\|^2 \leq \|u\|^2 - \mu \int_{\R^N} \frac{u^2}{|x|^\alpha} \, dx \leq \|u\|^2.
$$
\end{Lem}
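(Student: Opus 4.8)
The plan is to split the claim into the (trivial) right-hand inequality and the (substantial, but still elementary) left-hand inequality, the latter being a direct consequence of the fractional Hardy inequality of Lemma \ref{Lem:HardyIneq}. Throughout I would abbreviate
$$
[u]^2 := c_{N,\alpha} \iint_{\R^N \times \R^N} \frac{|u(x)-u(y)|^2}{|x-y|^{N+\alpha}} \, dx \, dy,
$$
so that $\|u\|^2 = [u]^2 + \int_{\R^N} V(x) u^2 \, dx$ and the middle expression in the statement equals $[u]^2 + \int_{\R^N} V(x) u^2 \, dx - \mu \int_{\R^N} |x|^{-\alpha} u^2 \, dx$.

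For the upper bound I would simply note that $\mu \geq 0$ and that the Hardy integral $\int_{\R^N} |x|^{-\alpha} u^2 \, dx$ is nonnegative (and finite, by Lemma \ref{Lem:HardyIneq}), so that subtracting $\mu \int_{\R^N} |x|^{-\alpha} u^2 \, dx$ can only decrease $\|u\|^2$. This gives the right-hand inequality at once.

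For the lower bound the key step is to dominate the singular term by the Gagliardo seminorm. Applying Lemma \ref{Lem:HardyIneq} and recalling $\mu^* = H_{N,\alpha} c_{N,\alpha}$,
$$
\mu \int_{\R^N} \frac{u^2}{|x|^\alpha} \, dx \leq \frac{\mu}{\mu^*}\, c_{N,\alpha} \iint_{\R^N \times \R^N} \frac{|u(x)-u(y)|^2}{|x-y|^{N+\alpha}} \, dx \, dy = \frac{\mu}{\mu^*}\,[u]^2.
$$
Subtracting this and using $V \geq V_0 > 0$, hence $\int_{\R^N} V(x) u^2 \, dx \geq 0$, I would obtain
$$
\|u\|^2 - \mu \int_{\R^N} \frac{u^2}{|x|^\alpha} \, dx \geq \Big(1 - \frac{\mu}{\mu^*}\Big)[u]^2 + \int_{\R^N} V(x) u^2 \, dx \geq \Big(1 - \frac{\mu}{\mu^*}\Big)\|u\|^2,
$$
where the last step uses $1 - \mu/\mu^* \in (0,1]$ (so that shrinking the coefficient of the nonnegative term $\int_{\R^N} V u^2 \, dx$ from $1$ down to $1-\mu/\mu^*$ only decreases the right-hand side).

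Finally I would set $D_{N,\alpha,\mu} := 1 - \mu/\mu^*$ when $\mu > 0$, which lies strictly between $0$ and $1$; for $\mu = 0$ the middle expression is exactly $\|u\|^2$ and the bound holds with any constant, so one just fixes an arbitrary $D_{N,\alpha,0} \in (0,1)$. There is no genuine obstacle here: the entire content is the sharp constant in Lemma \ref{Lem:HardyIneq}, and the only point demanding a moment's care is enforcing the \emph{strict} inequality $D_{N,\alpha,\mu} < 1$ in the degenerate case $\mu = 0$.
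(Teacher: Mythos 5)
Your proof is correct and takes essentially the same route as the paper: both reduce the left-hand inequality to the sharp fractional Hardy inequality of Lemma \ref{Lem:HardyIneq}, absorbing $\mu \int_{\R^N} |x|^{-\alpha} u^2 \, dx$ into the Gagliardo seminorm via $\mu < \mu^* = c_{N,\alpha} H_{N,\alpha}$ and using $V \geq V_0 > 0$ to control the potential term. Your final bookkeeping is in fact slightly cleaner than the paper's: you get $D_{N,\alpha,\mu} = 1 - \mu/\mu^*$ in one line (with the correct separate fix at $\mu = 0$ to keep $D_{N,\alpha,\mu} < 1$ strict), whereas the paper writes $\|u\|_\mu^2 = \tfrac12 \|u\|_\mu^2 + \tfrac12 \|u\|_\mu^2$ and combines two intermediate bounds, arriving at the smaller constant $\tfrac12\left(1 - \mu/\mu^*\right)$, which is automatically below $1$ even when $\mu = 0$.
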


\begin{proof}
We have
$$
\| u \|^2 = c_{N,\alpha} \iint_{\R^N \times \R^N} \frac{|u(x)-u(y)|^2}{|x-y|^{N+\alpha}} \, dx \, dy + \int_{\R^N} V(x) |u|^2 \, dx.
$$
Put
$$
\| u \|^2_\mu := \| u \|^2 - \mu \int_{\R^N} \frac{|u|^2}{|x|^\alpha} \, dx.
$$
In view of Lemma \ref{Lem:HardyIneq}
$$
c_{N,\alpha} \iint_{\R^N \times \R^N} \frac{|u(x)-u(y)|^2}{|x-y|^{N+\alpha}} \, dx \, dy \geq c_{N,\alpha} H_{N,\alpha} \int_{\R^N} \frac{|u(x)|^2}{|x|^\alpha} \, dx.
$$
 Then $\mu < \mu^*$ means that
$$
1 \geq 1 - \frac{\mu}{c_{N,\alpha} H_{N,\alpha}} > 0.
$$
Then
\begin{align} \label{Eq:2.1}
& c_{N,\alpha} \iint_{\R^N \times \R^N} \frac{|u(x)-u(y)|^2}{|x-y|^{N+\alpha}} \, dx \, dy - \mu \int_{\R^N} \frac{|u(x)|^2}{|x|^\alpha} \, dx \\ \geq& \left(1 - \frac{ \mu}{c_{N,\alpha} H_{N,\alpha} } \right) c_{N,\alpha} \iint_{\R^N \times \R^N}  \frac{|u(x)-u(y)|^2}{|x-y|^{N+\alpha}} \, dx \, dy \geq 0.\nonumber
\end{align}
On the other hand
\begin{equation} \label{Eq:2.2}
\| u\|_\mu^2 \geq \left( 1 - \frac{ \mu}{c_{N,\alpha} H_{N,\alpha} } \right) c_{N,\alpha} \iint_{\R^N \times \R^N}  \frac{|u(x)-u(y)|^2}{|x-y|^{N+\alpha}} \, dx \, dy.
\end{equation}
From (\ref{Eq:2.1}) we have
\begin{equation} \label{Eq:2.3}
\| u \|_\mu^2 \geq \int_{\R^N} V(x) u^2 \, dx.
\end{equation}
Combining (\ref{Eq:2.2}) and (\ref{Eq:2.3}) we obtain
\begin{align*}
\| u \|_\mu^2 &= \frac12 \|u\|_\mu^2 + \frac12 \|u\|_\mu^2 \\ &\geq \frac12 \left( 1 - \frac{ \mu}{c_{N,\alpha} H_{N,\alpha} } \right) c_{N,\alpha} \iint_{\R^N \times \R^N} \frac{|u(x)-u(y)|^2}{|x-y|^{N+\alpha}} \, dx \, dy + \frac12 \int_{\R^N} V(x) u^2 \, dx \\ &\geq \frac12 \left( 1 - \frac{\mu}{ c_{N,\alpha}H_{N,\alpha} } \right) \|u\|^2.
\end{align*}
\end{proof}

We will briefly introduce the abstract setting from \cite{BieganowskiMederski}. Suppose that $E$ is a Hilbert space with respect to the norm $\| \cdot \|_E$. Let us consider a functional $\cJ : E \rightarrow \R$ of the general form
$$
\cJ(u) = \frac{1}{2} \|u\|^2_E - \cI (u),
$$
where $\cI : E \rightarrow \R$ is of $\cC^1$-class. The Nehari manifold for the functional $\cJ$ is given by
$$
\cN = \{ u \in E \setminus \{0\} \ : \ \cJ'(u)(u) = 0 \}.
$$ 
Let us recall a critical point theorem from \cite{BieganowskiMederski}, which is based on the approach of \cite{SzulkinWeth}, \cite{MederskiARMA} and \cite{BartschMederski1}.

\begin{Th}[{\cite{BieganowskiMederski}[Theorem 2.1]}]\label{ThSetting}
Suppose that the following conditions hold:
\begin{enumerate}
\item[(J1)] there is $r>0$ such that $a:= \inf_{\|u\|_E=r} \cJ (u) > \cJ(0) = 0$;
\item[(J2)] there is $q \geq 2$ such that $\cI (t_n u_n) / t_n^q \to \infty$ for any $t_n \to\infty$ and $u_n\to u \neq 0$ as $n\to\infty$;
\item[(J3)] for $t \in (0,\infty) \setminus \{1\}$ and $u \in \cN$
$$
\frac{t^2-1}{2} \cI'(u)(u) - \cI(tu)+\cI(u) < 0;
$$
\item[(J4)] $\cJ$ is coercive on $\cN$.
\end{enumerate}
Then $\inf_\cN \cJ > 0$ and there exists a bounded minimizing sequence for $\cJ$ on $\cN$, i.e. there is a sequence $(u_n) \subset \cN$ such that $\cJ(u_n) \to \inf_\cN \cJ$ and $\cJ'(u_n) \to 0$.
\end{Th}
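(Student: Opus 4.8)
The plan is to carry out the Nehari manifold reduction of Szulkin--Weth. The idea is that, under (J1)--(J3), every ray $\{tu : t>0\}$ with $u \neq 0$ meets $\cN$ in exactly one point, which is the unique maximizer of $\cJ$ along that ray; this lets me parametrize $\cN$ by the unit sphere $S := \{u \in E : \|u\|_E = 1\}$, transfer the constrained minimization of $\cJ$ over $\cN$ to a free minimization of $\Psi := \cJ \circ m$ over the complete $\cC^1$-manifold $S$, and finally apply Ekeland's variational principle.

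First I would fix $u \neq 0$ and study the fiber map $\phi_u(t) := \cJ(tu) = \tfrac{t^2}{2}\|u\|_E^2 - \cI(tu)$ for $t>0$. Taking $t = r/\|u\|_E$ and using (J1) shows $\phi_u$ attains positive values; since $\phi_u(0^+) = 0$ and (J2) with $q \geq 2$ forces $\cI(tu)$ to grow superquadratically, so that $\phi_u(t) \to -\infty$ as $t \to \infty$, the map $\phi_u$ has a positive maximum at some $t(u)>0$, and $\phi_u'(t(u)) = 0$ means $t(u)u \in \cN$. For uniqueness I would use (J3): if $u \in \cN$ then $\|u\|_E^2 = \cI'(u)(u)$, so for $t \neq 1$
$$\cJ(tu) - \cJ(u) = \frac{t^2-1}{2}\cI'(u)(u) - \cI(tu) + \cI(u) < 0,$$
whence $t=1$ is the strict global maximum of $\phi_u$ and $u$ is the only positive multiple of $u$ in $\cN$. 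Consequently $m(w) := t(w)w$ is a bijection $S \to \cN$ with $m^{-1}(u) = u/\|u\|_E$, and for every $u \in \cN$ one gets $\cJ(u) = \max_{t>0}\phi_u(t) \geq \phi_u(r/\|u\|_E) \geq a$, so that $\inf_\cN \cJ \geq a > 0$; this already yields the first assertion. I would also note that $\cJ(u_n) \to \cJ(0) = 0$ whenever $\|u_n\|_E \to 0$, so (J1) forces $\inf_\cN \|u\|_E =: \rho > 0$.

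The technical core is to upgrade $m$ to a homeomorphism and to show $\Psi \in \cC^1$ with $\Psi'(w)(z) = \|m(w)\|_E\,\cJ'(m(w))(z)$ for $z \in T_w S$; this reduction lemma, rather than any single estimate, is where the main difficulty lies, since it must be carried out without assuming $\cC^2$ regularity. To see that $w \mapsto t(w)$ is continuous, take $w_n \to w$ in $S$: the sequence $(t(w_n))$ is bounded, since $t(w_n) \to \infty$ would give $\cJ(t(w_n)w_n)/t(w_n)^q \to -\infty$ by (J2), contradicting $\cJ(t(w_n)w_n) \geq a > 0$; passing to a subsequence $t(w_n) \to t_0$, the value $t_0$ is positive (else $m(w_n) \to 0$, contradicting $\cJ(m(w_n)) \geq a$) and satisfies $\cJ'(t_0 w)(t_0 w) = 0$ by continuity of $\cJ'$, so $t_0 = t(w)$ by uniqueness. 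The gradient formula then follows because $\cJ'(m(w))(w) = 0$ annihilates the term involving $t'(w)$ in the chain rule, exactly as in Szulkin--Weth.

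Finally, the formula gives $\Psi'(w) = 0$ if and only if $\cJ'(m(w)) = 0$ (splitting $E = T_w S \oplus \R w$ and using $\cJ'(m(w))(w)=0$), and $\inf_S \Psi = \inf_\cN \cJ$ with minimizing sequences corresponding under $m$. I would therefore apply Ekeland's variational principle to $\Psi$ on the complete manifold $S$ to obtain $(w_n) \subset S$ with $\Psi(w_n) \to \inf_S \Psi$ and $\Psi'(w_n) \to 0$. Setting $u_n := m(w_n) \in \cN$ and using $\|m(w_n)\|_E \geq \rho > 0$ in the gradient formula yields $\cJ(u_n) \to \inf_\cN \cJ$ and $\cJ'(u_n) \to 0$. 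Boundedness of $(u_n)$ is then immediate from the coercivity (J4), completing the construction of the desired bounded minimizing Palais--Smale sequence.
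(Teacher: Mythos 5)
Your proposal is correct and takes essentially the same approach as the paper's source for this statement: the paper imports Theorem \ref{ThSetting} from \cite{BieganowskiMederski}, whose proof is exactly the Szulkin--Weth Nehari reduction you describe (unique projection $m(w)=t(w)w$ onto $\cN$ via the fiber maps and (J3), the homeomorphism from the unit sphere to $\cN$, the reduced functional $\Psi=\cJ\circ m$, Ekeland's variational principle, and (J4) for boundedness), as the paper itself confirms when it records the uniqueness of $t(u)$ and the homeomorphism property as consequences of that proof. The one step to state carefully is the formula $\Psi'(w)(z)=\|m(w)\|_E\,\cJ'(m(w))(z)$, which --- as you acknowledge --- cannot literally come from a chain rule involving $t'(w)$ (differentiability of $t(\cdot)$ is not known a priori) but follows from the maximality of $t(w)$ via two-sided difference-quotient estimates, precisely as in \cite{SzulkinWeth}.
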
 

From the proof of \cite{BieganowskiMederski}[Theorem 2.1] it follows that for every $u \in E \setminus \{ 0\}$ there is unique number $t(u) > 0$ such that $t(u)u \in \cN$. Moreover the function $m : \{ u \in E \ : \ \|u\|_E = 1 \} \rightarrow \cN$ given by $m(u) = t(u)u$ is an homeomorphism. We will check that (J1)--(J4) are satisfied for $(E, \| \cdot \|_E) = \left( H^{\alpha / 2} (\R^N), \| \cdot \|_\mu \right)$, provided that $0 \leq \mu < \mu^*$. 

\begin{Lem}
Let $0 \leq \mu < \mu^*$ and assume that (F1)--(F4), ($K$) and ($V$) are satisfied. (J1)--(J4) are satisfied for $(E, \| \cdot \|_E) = \left( H^{\alpha / 2} (\R^N), \| \cdot \|_\mu \right)$.
\end{Lem}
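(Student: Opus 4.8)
The plan is to verify the four abstract conditions (J1)--(J4) for the functional $\J$ on $\left(H^{\alpha/2}(\R^N), \|\cdot\|_\mu\right)$, where $\cI(u) = \int_{\R^N}\big(F(x,u)-\frac{1}{q}K(x)|u|^q\big)\,dx$. The key preliminary observation is that by Lemma \ref{norm-eqv}, $\|\cdot\|_\mu$ is equivalent to the standard norm $\|\cdot\|$, so the fractional Sobolev embedding $H^{\alpha/2}(\R^N)\hookrightarrow L^t(\R^N)$ for $t\in[2,2^*_\alpha]$ remains valid with respect to $\|\cdot\|_\mu$, and this will be used repeatedly. Since $K\geq 0$ and $|u|^q\leq$ (something controlled by $F$ via (F3)), the negative $K$-term in $\cI$ only helps in several of the estimates.

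First I would check (J1). Using \eqref{eps} together with the subcritical embedding, I would estimate
$$
\cI(u)\leq \int_{\R^N}F(x,u)\,dx \leq \tfrac{\eps}{2}\|u\|_{L^2}^2 + C_\eps\|u\|_{L^p}^p \leq \tfrac{\eps}{2}C_1\|u\|_\mu^2 + C_\eps C_2 \|u\|_\mu^p,
$$
where I discard the nonnegative $\frac{1}{q}\int K|u|^q$ term. Hence $\J(u)\geq \frac12\|u\|_\mu^2 - \frac{\eps}{2}C_1\|u\|_\mu^2 - C_\eps C_2\|u\|_\mu^p$; choosing $\eps$ small and then $r>0$ small, since $p>2$ the quadratic term dominates and $a:=\inf_{\|u\|_\mu=r}\J(u)>0=\J(0)$.

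For (J2), I would use (F3): given $t_n\to\infty$ and $u_n\to u\neq0$, I must show $\cI(t_nu_n)/t_n^q\to\infty$. Writing $\cI(t_nu_n)=\int F(x,t_nu_n)\,dx - \frac{t_n^q}{q}\int K|u_n|^q\,dx$, I divide by $t_n^q$. The $K$-term is bounded (by $\|K\|_\infty$ and $L^q$-convergence of $u_n$), so it suffices to show $\int F(x,t_nu_n)/t_n^q\,dx\to\infty$, which follows from (F3) via Fatou's lemma applied on the set $\{u\neq0\}$, exactly as in the standard superquadratic argument. The condition $q>2$ and the growth bound $q<p$ ensure the nonlinear part genuinely grows faster than $t_n^q$ so that it beats the subtracted $K$-term. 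This step, reconciling the sign-changing term with the superquadraticity, is where I expect the main subtlety.

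For (J3) I would introduce $\psi(t):=\frac{t^2-1}{2}\cI'(u)(u)-\cI(tu)+\cI(u)$ and show $\psi(t)<0$ for $t\neq1$, $t>0$. The standard route (following \cite{BieganowskiMederski,SzulkinWeth}) is to reduce the integrated inequality to a pointwise one: for fixed $x$, set $g(t)=\frac{t^2-1}{2}\big(f(x,u)u - K(x)|u|^q\big) - \big(F(x,tu)-\frac{1}{q}K(x)|tu|^q\big)+\big(F(x,u)-\frac{1}{q}K(x)|u|^q\big)$ and verify $g(t)<0$ for $t\neq1$ using the monotonicity (F4) of $u\mapsto f(x,u)/|u|^{q-1}$ and the fact that $t\mapsto\frac{t^2-1}{2}t^{q-2}\cdot\frac{1}{?}$-type algebraic terms coming from the $K$-part combine favorably; the key algebraic fact is that $\frac{t^2-1}{2}q - t^q + 1$ has the right sign for $q\geq2$. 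Finally, for (J4) I would show $\J$ is coercive on $\cN$: on $\cN$ one has $\J(u)=\J(u)-\frac{1}{q}\J'(u)(u)=\big(\frac12-\frac1q\big)\|u\|_\mu^2 + \int_{\R^N}\big(\frac1q f(x,u)u - F(x,u)\big)\,dx$, and by (F4)/(F3) the integrand is nonnegative, so $\J(u)\geq\big(\frac12-\frac1q\big)\|u\|_\mu^2\to\infty$ as $\|u\|_\mu\to\infty$; here the $K$-terms cancel exactly in the combination $\J-\frac1q\J'$, which is precisely why the exponent $q$ is chosen to match the $K$-nonlinearity.
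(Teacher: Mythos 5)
Your verifications of (J1), (J2) and (J4) follow the paper's proof essentially verbatim: the small-sphere estimate via \eqref{eps} and the Sobolev embedding in the equivalent norm $\|\cdot\|_\mu$, Fatou's lemma with the bounded $K$-term after dividing by $t_n^q$, and the exact cancellation of the $K$-terms in $\J(u)-\frac1q\J'(u)(u)$ together with $f(x,u)u\geq qF(x,u)$. The genuine gap is in (J3): your proposed reduction to a pointwise inequality fails. Isolating the $K$-block of your $g(t)$ gives
\begin{equation*}
K(x)|u|^q\left(\frac{t^q-1}{q}-\frac{t^2-1}{2}\right),
\end{equation*}
and since $h(t)=\frac{t^q-1}{q}-\frac{t^2-1}{2}$ satisfies $h(1)=0$ and $h'(t)=t^{q-1}-t$, one has $h(t)>0$ for all $t\neq 1$ (because $q>2$). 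So the $K$-part contributes a strictly \emph{positive} term to $g(t)$: the algebraic sign you invoke is adverse, not favorable. Concretely, take $f(x,u)=|u|^{p-2}u$ and $K\equiv 1$; then $g(t)=|u|^p\left(\frac{t^2-1}{2}-\frac{t^p-1}{p}\right)+|u|^q h(t)$, and since $p>q$ this is $>0$ for $t\neq 1$ wherever $|u(x)|$ is small enough — which happens on a set of infinite measure for any $u\in H^{\alpha/2}(\R^N)$. This failure of the pointwise Szulkin--Weth-type argument is precisely the difficulty created by the sign-changing nonlinearity, and it is the reason the paper does not argue pointwise.

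The paper's argument for (J3) is global and crucially uses the hypothesis $u\in\cN$, which your sketch never invokes (note that (J3) is only required for $u\in\cN$). One computes
\begin{equation*}
\psi'(t)=\int_{\R^N}\big(tf(x,u)u-f(x,tu)u\big)\,dx+(t^{q-1}-t)\int_{\R^N}K(x)|u|^q\,dx,
\end{equation*}
and the Nehari constraint yields $\int_{\R^N}K(x)|u|^q\,dx=\int_{\R^N}f(x,u)u\,dx-\|u\|_\mu^2<\int_{\R^N}f(x,u)u\,dx$, so for $t>1$ (where $t^{q-1}-t>0$) the adverse term is absorbed:
\begin{equation*}
\psi'(t)<t^{q-1}\int_{\R^N}\left(f(x,u)u-\frac{f(x,tu)u}{t^{q-1}}\right)dx<0
\end{equation*}
by (F4), and symmetrically $\psi'(t)>0$ for $0<t<1$, whence $\psi(t)<\psi(1)=0$ for $t\neq 1$. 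Replace your pointwise reduction by this integrated monotonicity argument using $\|u\|_\mu^2=\int_{\R^N}f(x,u)u\,dx-\int_{\R^N}K(x)|u|^q\,dx>0$; the remaining three parts of your proof stand as written.
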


\begin{proof}
\begin{enumerate}
	\item[(J1)] Fix $\varepsilon > 0$. Observe that (F1) and (F2) implies that $F(x,u) \leq \varepsilon |u|^2 + C_\varepsilon |u|^p$ for some $C_\varepsilon > 0$. Therefore 
	$$
	\int_{\R^N} F(x,u) \, dx - \int_{\R^N} \frac{1}{q} \Ga (x) |u|^q \, dx \leq \int_{\R^N} F(x,u) \, dx \leq C_\mu (\varepsilon \|u\|_\mu^2 + C_\varepsilon \|u\|_\mu^p),
	$$
for some constant $C_\mu > 0$ provided by the Sobolev embedding theorem and Lemma \ref{norm-eqv}. Thus there is $r_\mu > 0$ such that 
$$
\int_{\R^N} F(x,u) \, dx - \int_{\R^N} \frac{1}{q} \Ga (x) |u|^q \, dx \leq \frac{1}{4} \|u\|_\mu^2
$$
for $\|u\|_\mu \leq r_\mu$.
Therefore
$$
\cJ (u) \geq \frac{1}{4} \|u\|_\mu^2 = \frac{1}{4} r_\mu^2 > 0
$$
for $\|u\|_\mu = r_\mu$.
	\item[(J2)] By (F3) and Fatou's lemma we get
	$$
	\cI (t_n u_n) / t_n^q = \int_{\R^N} \frac{F(x, t_n u_n)}{t_n^q} \, dx - \frac{1}{q} \int_{\R^N} \Ga (x) |u_n|^q \, dx \to \infty.
	$$
	\item[(J3)] Fix $u \in \cN$ and consider
$$
\psi (t) = \frac{t^2-1}{2} \cI'(u)(u) - \cI(tu) + \cI(u)
$$
for $t \geq 0$. Then $\psi(1) = 0$ and
$$
\frac{d\psi(t)}{dt} = \int_{\R^N} tf(x,u)u - f(x,tu)u \, dx + (t^{q-1} - t) \int_{\R^N} \Ga(x) |u|^q \, dx.
$$
Since $u \in \cN$
$$
\int_{\R^N} f(x,u)u \, dx - \int_{\R^N} \Ga (x) |u|^q \, dx = \|u\|_\mu^2 > 0.
$$
Therefore, for $t>1$, we have
$$
\frac{d\psi(t)}{dt} < \int_{\R^N} t^{q-1} f(x,u)u - f(x,tu)u \, dx = t^{q-1} \int_{\R^N} f(x,u)u - \frac{f(x,tu)u}{t^{q-1}} \, dx < 0,
$$
by (F4). Similarly $\frac{d\psi(t)}{dt} > 0$ for $t < 1$. Therefore $\psi(t) < \psi(1) = 0$ for $t \neq 1$, i.e.
$$
\frac{t^2-1}{2} \cI'(u)(u) - \cI(tu) + \cI(u) < 0.
$$

\item[(J4)] Let $(u_n) \subset \cN$ be a sequence such that $\|u_n\|_\mu \to \infty$ as $n\to\infty$. (F3) implies that
$$
f(x,u)u = q\int_0^u \frac{f(x,u)}{u^{q-1}}s^{q-1}\,ds \geq q\int_0^u \frac{f(x,s)}{s^{q-1}}s^{q-1}\,ds=q F(x,u)
$$
for $u\geq 0$ and similarly $f(x,u)u\geq q F(x,u)$ for $u<0$.
Therefore
\begin{align*}
\cJ(u_n) &= \frac{1}{2} \|u_n\|_\mu^2 - \int_{\R^N} F(x, u_n) \, dx + \frac{1}{q} \int_{\R^N} \Ga (x) |u_n|^q \, dx = \\
&= \left( \frac{1}{2} - \frac{1}{q} \right) \|u_n\|_\mu^2 + \int_{\R^N} \frac{1}{q} f(x, u_n) u_n - F(x, u_n) \, dx \geq \\
&\geq \left( \frac{1}{2} - \frac{1}{q} \right) \|u_n\|_\mu^2 \to \infty
\end{align*}
as $n\to\infty$, since $q > 2$.
\end{enumerate}
\end{proof}

The following fact is very useful to deal with the Hardy-type term and plays a very important role in the proof of the decomposition result.

\begin{Lem}\label{hardyLemma}
If $|x_n|\to\infty$, then for any $u \in H^{\alpha / 2}(\R^N)$,
$$
\int_{\R^N} \frac{1}{|x|^\alpha} |u(\cdot - x_n)|^2 \, dx \to 0.
$$
\end{Lem}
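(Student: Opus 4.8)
The plan is to reduce the statement by the change of variables $y = x - x_n$, which rewrites the quantity as
$$
T_n(u) := \int_{\R^N} \frac{|u(y)|^2}{|y+x_n|^\alpha} \, dy,
$$
so that the claim becomes $T_n(u) \to 0$ as $|x_n| \to \infty$. The obstruction is that the weight $|y+x_n|^{-\alpha}$ lies in no single space $L^s(\R^N)$, being non-integrable both near its singularity $y = -x_n$ and at infinity; thus neither H\"older's inequality with a fixed exponent nor dominated convergence applies directly to a general $u \in H^{\alpha/2}(\R^N)$. The key idea I would use to circumvent this is that the Gagliardo seminorm is translation invariant, whereas Lemma \ref{Lem:HardyIneq} provides a bound for the weight centred at the origin; combining the two yields an estimate for $T_n$ that is uniform in $n$, which then powers an approximation argument.

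The first step is therefore to record, for every $v \in H^{\alpha/2}(\R^N)$ and every $n$, the uniform estimate
$$
T_n(v) \le \frac{1}{H_{N,\alpha}} \iint_{\R^N \times \R^N} \frac{|v(y)-v(z)|^2}{|y-z|^{N+\alpha}} \, dy \, dz \le C \|v\|^2,
$$
which follows by applying Lemma \ref{Lem:HardyIneq} to the translate $v(\cdot - x_n)$ (whose Gagliardo seminorm coincides with that of $v$) and then invoking the equivalence of $\|\cdot\|$ with the standard norm on $H^{\alpha/2}(\R^N)$.

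The second step is to verify the conclusion for $u \in \cC_0^\infty(\R^N)$, where it is immediate: if $\supp u \subset \overline{B(0,R)}$, then for $|x_n| > R$ every $y$ in the support satisfies $|y+x_n| \ge |x_n| - R$, whence $T_n(u) \le (|x_n|-R)^{-\alpha} \|u\|_{L^2}^2 \to 0$. Finally I would pass to a general $u$ by density. Given $\eps > 0$, choose $u_k \in \cC_0^\infty(\R^N)$ with $u_k \to u$ in $H^{\alpha/2}(\R^N)$; using $\bigl||u|-|u_k|\bigr| \le |u-u_k|$, the Cauchy--Schwarz inequality for the weighted integral, and the uniform estimate of the first step, one obtains
$$
|T_n(u) - T_n(u_k)| \le \Bigl( \int_{\R^N} \frac{|u-u_k|^2}{|y+x_n|^\alpha} \, dy \Bigr)^{1/2} \Bigl( \int_{\R^N} \frac{(|u|+|u_k|)^2}{|y+x_n|^\alpha} \, dy \Bigr)^{1/2} \le C \|u-u_k\| \bigl( \|u\| + \|u_k\| \bigr),
$$
a bound independent of $n$. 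Taking $k$ large makes the right-hand side at most $\eps/2$ for all $n$ at once, and for that fixed $k$ the second step gives $T_n(u_k) < \eps/2$ once $|x_n|$ is large; hence $\limsup_n T_n(u) \le \eps$, and letting $\eps \to 0$ concludes. The crux --- and the only real obstacle --- is the uniform-in-$n$ Hardy bound of the first step, which substitutes for the unavailable dominated-convergence argument and legitimises the reduction to compactly supported functions.
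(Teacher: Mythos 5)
Your proposal is correct and takes essentially the same route as the paper's proof: both reduce by density to $\varphi \in \cC_0^\infty(\R^N)$, dispose of the compactly supported case via the elementary bound $(|x_n|-R)^{-\alpha}|u|_{L^2}^2$, and control the remainder uniformly in $n$ by applying the fractional Hardy inequality (Lemma \ref{Lem:HardyIneq}) to the translated difference, exploiting the translation invariance of the Gagliardo seminorm. The only (cosmetic) difference is that you split $|T_n(u)-T_n(u_k)|$ by Cauchy--Schwarz in the weighted integral, whereas the paper uses a triangle-inequality splitting of $\int |x|^{-\alpha}|u(\cdot-x_n)|^2\,dx$ directly; your version even makes explicit the multiplicative constant that the paper's displayed inequality tacitly absorbs.
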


\begin{proof}
Let $\varphi_m \in \cC_0^\infty (\R^N)$ and $\varphi_m \to u$ in $H^{\alpha / 2} (\R^N)$ as $m \to \infty$. Take $R_m > 0$ large enough that
$$
\supp \varphi_m \subset B(0, R_m).
$$
Obviously, for any $m$ there is $n(m)$ such that $|x_{n(m)}|-R_m \geq m$ and $n(m)$ is increasing. We get
\begin{eqnarray*}
\int_{\R^N} \frac{1}{|x|^\alpha} | \varphi_m (\cdot - x_n)|^2 \, dx &=& \int_{\R^N} \frac{1}{|x+x_n|^\alpha} |\varphi_m|^2 \, dx = \int_{B(0,R_m)} \frac{1}{|x+x_n|^\alpha} |\varphi_m|^2 \, dx \\
&\leq& \frac{1}{(|x_n|-R_m)^\alpha} \int_{B(0,R_m)} |\varphi_m|^2 \, dx \leq \frac{1}{m^\alpha} \int_{\R^N} |\varphi_m|^2 \, dx \to 0
\end{eqnarray*}
We have
\begin{eqnarray*}
\int_{\R^N} \frac{1}{|x|^\alpha} | u(\cdot - x_n) |^2 \, dx &\leq& \int_{\R^N} \frac{1}{|x|^\alpha} | u(\cdot - x_n) - \varphi_m (\cdot - x_n) |^2 \, dx + \int_{\R^N} \frac{1}{|x|^\alpha} | \varphi_m (\cdot - x_n)|^2 \, dx \\ &=& \int_{\R^N} \frac{1}{|x|^\alpha} | u(\cdot - x_n) - \varphi_m (\cdot - x_n) |^2 \, dx + o(1).
\end{eqnarray*}
In view of the fractional Hardy inequality we obtain
$$
\int_{\R^N} \frac{1}{|x|^\alpha} | u(\cdot - x_n) - \varphi_m (\cdot - x_n)|^2 \, dx \leq \frac{1}{H_{N,\alpha}} \iint_{\R^N \times \R^N} \frac{|u(x)- \varphi_m (x)- u(y) + \varphi_m(y)|^2}{|x-y|^{N+\alpha}} \, dx \, dy \to 0,
$$
since $\varphi_m \to u$ in $H^{\alpha / 2} (\R^N)$.
\end{proof}

\section{Profile decomposition of bounded Palais-Smale sequences}
\label{sect:Splitting}

The main theorem in this section is a modification of the decomposition results from \cite{BieganowskiMederski}[Theorem 4.1] and \cite{Bieganowski}[Theorem 3.1], in the spirit of \cite{JeanjeanTanaka}. We consider the functional $\cJ : H^{\alpha / 2} (\R^N) \rightarrow \R$ of the form
$$
\cJ (u) = \frac{1}{2} \|u\|^2 - \mu \int_{\R^N} \frac{u^2}{|x|^\alpha} \, dx - \int_{\R^N} G(x,u) \, dx,
$$
where the norm is defined by 
$$
\|u\|^2 = c_{N,\alpha} \iint_{\R^N \times \R^N} \frac{|u(x)-u(y)|^2}{|x-y|^{N+\alpha}} \, dx \, dy + \int_{\R^N} V(x) u^2 \, dx.
$$
The norm is associated with the following scalar product
$$
\langle u, v \rangle := c_{N,\alpha} \iint_{\R^N \times \R^N} \frac{|u(x)-v(y)|^2}{|x-y|^{N+\alpha}} \, dx \, dy + \int_{\R^N} V(x) u(x)v(x) \, dx.
$$
We suppose that $G(x,u) = \int_0^u g(x,s) \, ds$, where $g : \R^N \times \R \rightarrow \R$ satisfies:
\begin{itemize}
\item[($G1$)] $g(\cdot, u)$ is measurable and $\mathbb{Z}^N$-periodic in $x \in \R^N$, $g(x,\cdot)$ is continuous in $u \in \R$ for a.e. $x \in \R^N$;
\item[($G2$)] $g(x,u) = o(|u|)$ as $|u| \to 0^+$ uniformly in $x \in \R^N$;
\item[($G3$)] there exists $2 < r < 2_\alpha^*$ such that $\lim_{|u| \to \infty} g(x,u)/|u|^{r-1} = 0$ uniformly in $x \in \R^N$;
\item[($G4$)] for each $a<b$ there is a constant $c>0$ such that  $|g(x,u)|\leq c$ for a.e. $x\in\R^N$ and $a\leq u\leq b$.
\end{itemize} 
We will also denote
$$
\cJ_{\infty} (u) = \cJ (u) - \frac{1}{2} \int_{\R^N} V_{loc} (x) |u|^2 \, dx.
$$

\begin{Th}\label{ThDecomposition}
Suppose that ($G1$)--($G4$) and ($V$) hold and $0 \leq \mu < \mu^*$. Let $(u_n)$ be a bounded Palais-Smale sequence for $\cJ$. Then passing to a subsequence of $(u_n)$, there exist an integer $\ell > 0$ and sequences $(y_n^k) \subset \mathbb{Z}^N$, $w^k \in H^{\alpha / 2} (\R^N)$, $k = 1, \ldots, \ell$ such that:
\begin{itemize}
\item[$(a)$] $u_n \rightharpoonup u_0$ and $\cJ' (u_0) = 0$;
\item[$(b)$] $|y_n^k| \to \infty$ and $|y_n^k - y_n^{k'}| \to \infty$ for $k \neq k'$;
\item[$(c)$] $w^k \neq 0$ and $\cJ_{\infty}'(w^k) = 0$ for each $1 \leq k \leq \ell$;
\item[$(d)$] $u_n - u_0 - \sum_{k=1}^\ell w^k (\cdot - y_n^k) \to 0$ in $H^{\alpha / 2} (\R^N)$ as $n \to \infty$;
\item[$(e)$] $\cJ (u_n) \to \cJ(u_0) + \sum_{k=1}^\ell \cJ_{\infty} (w^k) + \frac{\mu}{2} \sum_{k=1}^\ell \int_{\R^N} \frac{|w^k|^2}{|x|^\alpha} \, dx$.
\end{itemize}
\end{Th}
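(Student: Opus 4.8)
The plan is to run an iterative concentration--compactness (profile decomposition) argument in the spirit of \cite{JeanjeanTanaka}, adapting the fractional versions already used in \cite{BieganowskiMederski} and \cite{Bieganowski} to the presence of the two non-translation-invariant terms, the Hardy term $-\frac{\mu}{2}\int_{\R^N}|x|^{-\alpha}u^2\,dx$ and the localized potential contribution $-\frac12\int_{\R^N}V_{loc}|u|^2\,dx$. Since $(u_n)$ is bounded, after passing to a subsequence we have $u_n\rightharpoonup u_0$ in $H^{\alpha/2}(\R^N)$, $u_n\to u_0$ in $L^2_{loc}(\R^N)$ and a.e. To obtain $(a)$ I would pass to the limit in $\cJ'(u_n)(\varphi)\to 0$ for fixed $\varphi\in\cC_0^\infty(\R^N)$: the bilinear form converges by weak convergence, the Hardy term $\int_{\R^N}|x|^{-\alpha}u_n\varphi\,dx\to\int_{\R^N}|x|^{-\alpha}u_0\varphi\,dx$ because the singularity at the origin is controlled by Lemma \ref{Lem:HardyIneq} while away from it local compactness applies, the $V_{loc}$-term converges using $V_{loc}\in L^s$ with $s\geq N/\alpha$ together with local compactness, and the nonlinear term converges by $(G1)$--$(G3)$ and a Vitali/dominated argument on $\supp\varphi$. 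Hence $\cJ'(u_0)=0$.

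Next set $v_n^1:=u_n-u_0\rightharpoonup 0$. If $v_n^1\to 0$ in $H^{\alpha/2}(\R^N)$ the process stops with $\ell=0$; otherwise a fractional Lions-type vanishing lemma provides $R>0$, $\delta>0$ and $(y_n^1)\subset\Z^N$ with $\liminf_n\int_{B(y_n^1,R)}|v_n^1|^2\,dx\geq\delta>0$. Since $v_n^1\rightharpoonup 0$, no mass can concentrate at a fixed point, so necessarily $|y_n^1|\to\infty$, which is half of $(b)$. Passing to a subsequence, $v_n^1(\cdot+y_n^1)\rightharpoonup w^1\neq 0$. To establish $(c)$ for $w^1$ I would test $\cJ'(u_n)\to 0$ with $\varphi(\cdot-y_n^1)$: by $\Z^N$-periodicity of $V_{per}$ and of $g$ the periodic part of the form converges to the corresponding periodic quantities, while the $V_{loc}$-term vanishes (as $V_{loc}(\cdot+y_n^1)\to 0$) and the Hardy term vanishes by Lemma \ref{hardyLemma} (here $|y_n^1|\to\infty$ is exactly what is needed). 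This yields the limiting equation $\cJ_\infty'(w^1)=0$ with $w^1\neq 0$.

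The iteration then proceeds by subtracting the extracted bump: put $v_n^2:=v_n^1-w^1(\cdot-y_n^1)$, observe $v_n^2\rightharpoonup 0$, and repeat. A Brezis--Lieb splitting of both $\|\cdot\|^2$ (using $V_{loc}\to 0$, so the escaping profiles only feel the periodic part of the norm) and of $\int_{\R^N}G(x,\cdot)\,dx$ (using periodicity of $G$) shows that at each step the squared norm drops by the mass of the new profile. Since every nontrivial solution $w^k$ of the limiting equation carries a uniform lower bound $\|w^k\|^2\geq\rho>0$ (coming from $(G2)$--$(G3)$ and the mountain-pass/Nehari geometry encoded in (J1)), boundedness of $(u_n)$ forces the procedure to terminate after finitely many steps at some $\ell$, with $v_n^{\ell+1}\to 0$ strongly; this is $(d)$, and the mutual divergence $|y_n^k-y_n^{k'}|\to\infty$ in $(b)$ follows from the orthogonality built into the extraction. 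Finally $(e)$ is the bookkeeping of these splittings: the norm and the nonlinear integral distribute over $u_0$ and the $w^k$, whereas the Hardy energy $\frac\mu2\int_{\R^N}|x|^{-\alpha}u_n^2\,dx$ converges to $\frac\mu2\int_{\R^N}|x|^{-\alpha}u_0^2\,dx$ alone, because by Lemma \ref{hardyLemma} the translated profiles contribute nothing to it. Since each profile's limiting energy is the \emph{Hardy-free} quantity $\frac12\|w^k\|^2-\int_{\R^N}G(x,w^k)\,dx=\cJ_\infty(w^k)+\frac\mu2\int_{\R^N}|x|^{-\alpha}|w^k|^2\,dx$, the correction terms in $(e)$ appear exactly.

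The main obstacle is controlling the two non-periodic terms throughout the extraction. Lemma \ref{hardyLemma} is the decisive tool: it guarantees both that the Hardy term drops out of every shifted limiting equation (so the profiles solve the translation-invariant problem) and that it does not leak into the energy of the escaping bumps, which is precisely what produces the clean correction in $(e)$; analogously the hypothesis $V_{loc}\in L^s$, $s\geq N/\alpha$, together with $V_{loc}\to 0$ at infinity, lets the localized potential be absorbed into the energy of $u_0$ only. The remaining delicate point is the Brezis--Lieb analysis of $\int_{\R^N}G(x,\cdot)\,dx$ in the fractional space under the weak hypotheses $(G1)$--$(G4)$ (no Ambrosetti--Rabinowitz condition), ensuring that the extracted profiles are genuine nonzero critical points carrying uniformly positive norm, so that $\ell$ is finite.
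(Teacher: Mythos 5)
Your outline reproduces the paper's overall architecture (iterative profile extraction in the spirit of Jeanjean--Tanaka, Lemma \ref{hardyLemma} plus H\"older for the shifted test functions, Vitali arguments for $V_{loc}$ and the nonlinearity, and the same energy bookkeeping in $(e)$, where the escaping bumps carry no Hardy mass and hence produce the correction $\frac{\mu}{2}\sum_k\int_{\R^N}|w^k|^2|x|^{-\alpha}\,dx$). But there is one genuine gap: you never prove strong convergence of the remainder --- the content of $(d)$ and of the vanishing alternative --- and in particular you never use the hypothesis $0\leq\mu<\mu^*$, which enters the paper's proof at exactly this point. Your dichotomy is stated as ``either $v_n^1\to 0$ in $H^{\alpha/2}(\R^N)$, or a Lions-type lemma produces a concentrating sequence''; this is backwards, since failure of strong $H^{\alpha/2}$-convergence does not imply non-vanishing. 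The correct dichotomy is on the concentration function $\sup_{z}\int_{B(z,1)}|\xi_n|^2\,dx$ for the remainder $\xi_n=u_n-u_0-\sum_{k}w^k(\cdot-y_n^k)$, and in the vanishing case strong convergence must be \emph{proved}. Testing $\cJ'(u_n)$ with $\xi_n$ and killing the nonlinear, $V_{loc}$- and cross terms by Vitali and Lions' lemma, one is left with
$$
\|\xi_n\|^2=\mu\int_{\R^N}\frac{|\xi_n|^2}{|x|^\alpha}\,dx+o(1),
$$
and here Lemma \ref{hardyLemma} is useless: $\xi_n$ does not escape to infinity, so its Hardy mass need not vanish. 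The paper closes this step with Lemma \ref{norm-eqv}, i.e. the coercivity $D_{N,\alpha,\mu}\|\xi_n\|^2\leq\|\xi_n\|^2-\mu\int_{\R^N}|\xi_n|^2|x|^{-\alpha}\,dx=o(1)$ valid precisely because $\mu<\mu^*$. This absorption argument is the only place in the whole decomposition where the restriction on $\mu$ is used, and it is exactly the ingredient missing from your write-up.

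Your substitute mechanism --- a Brezis--Lieb norm-drop forcing termination of the iteration --- does give finiteness of $\ell$ (as in the paper, via the uniform lower bound $\|w^k\|\geq\rho>0$ for nontrivial critical points of the limit functional), but it does not deliver $(d)$: termination only means that the vanishing alternative holds for the final remainder, at which point you are returned to the missing absorption step above; ``the procedure terminates, hence $v_n^{\ell+1}\to 0$ strongly'' conflates the two. The remaining steps of your proposal are sound and essentially identical to the paper's: Step $(a)$ via weak convergence in $L^2(\R^N,|x|^{-\alpha}dx)$ (your near/far splitting of the Hardy term is an equivalent device, both resting on Lemma \ref{Lem:HardyIneq}), the identification of the limiting equation for each profile via translated test functions, and the convergence $\int_{\R^N}|x|^{-\alpha}u_n^2\,dx\to\int_{\R^N}|x|^{-\alpha}u_0^2\,dx$ used in $(e)$.
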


\begin{Rem} \label{Rem:Epsilon}
Note that (G2)--(G4) imply that for every $\varepsilon > 0$ there is $C_\varepsilon > 0$ such that
$$
|g(x,u)| \leq \varepsilon |u| + C_\varepsilon |u|^{r-1}
$$
for any $u \in \R$ and a.e. $x \in \R^N$.
\end{Rem}

\begin{Rem}
In view of Remark \ref{Rem:Epsilon} and continuous Sobolev embeddings there is constant $\rho > 0$ such that
$$
\| v \| \geq \rho > 0
$$
for every nontrivial, critical point of $\cJ$ on $H^{\alpha / 2} (\R^N)$. It is also true for $\cJ_{\infty}$.
\end{Rem}

\begin{proof}
\textbf{Step 1:} \textit{We may find a subsequence of $(u_n)$ such that $u_n \rightharpoonup u_0$, where $u_0 \in H^{\alpha / 2} (\R^N)$ is a critical point of $\cJ$.} \\
Since $(u_n) \subset H^{\alpha / 2} (\R^N)$ is bounded, we may assume that (up to a subsequence) $u_n \rightharpoonup u_0$ in $H^{\alpha / 2} (\R^N)$ and $u_n(x) \to u_0(x)$ for a.e. $x \in \R^N$, for some $u_0 \in H^{\alpha / 2} (\R^N)$. Take any $\varphi \in \cC_0^\infty (\R^N)$ and observe that
$$
\cJ'(u_n)(\varphi) - \cJ'(u_0)(\varphi) = \langle u_n - u_0, \varphi \rangle - \mu \int_{\R^N} \frac{(u_n - u_0)\varphi}{|x|^\alpha} \, dx  - \int_{\R^N} \left( g(x,u_n) - g(x,u_0) \right) \varphi \, dx.
$$
By the weak convergence we have that $\langle u_n - u_0, \varphi \rangle \to 0$. Moreover, for any measurable set $E \subset \mathrm{supp}\, \varphi$ we have
$$
\int_E |g(x,u_n) \varphi| \, dx \leq C ( |u_n|_2 |\varphi \chi_E |_2 + |u_n|_r^{r-1} | \varphi \chi_E|_r )
$$
and therefore by the Vitali convergence theorem
$$
\int_{\mathrm{supp}\, \varphi} \left( g(x,u_n) - g(x,u_0) \right) \varphi \, dx \to 0.
$$
In view of the Hardy inequality (Lemma \ref{Lem:HardyIneq}), $(u_n)$ is bounded in $L^2 \left(\R^N, \frac{dx}{|x|^\alpha} \right)$, hence we may assume that
$$
u_n \weakto u_0 \quad \mathrm{in} \ L^2 \left(\R^N, \frac{dx}{|x|^\alpha} \right).
$$
Thus
$$
\mu \int_{\R^N} \frac{(u_n - u_0)\varphi}{|x|^\alpha} \, dx \to 0.
$$
Hence $\cJ'(u_n) (\varphi) \to \cJ'(u_0) (\varphi)$ and therefore $\cJ'(u_0) = 0$.

\textbf{Step 2:} \textit{Let $v_n^1 = u_n - u_0$. Suppose that}
\begin{equation}\label{eq:step2}
\sup_{z \in \R^N} \int_{B(z,1)} |v_n^1|^2 \, dx \to 0.
\end{equation}
\textit{Then $u_n \to u_0$ and (a)--(e) hold for $\ell = 0$.} \\
Let $v_n^1 = u_n - u_0$ and suppose that
$$
\sup_{z \in \R^N} \int_{B(z,1)} |v_n^1|^2 \, dx \to 0.
$$
Observe that
\begin{align*}
\cJ'(u_n)(v_n^1) &= \langle u_n, v_n^1 \rangle - \mu \int_{\R^N} \frac{u_n v_n^1}{|x|^\alpha} \, dx - \int_{\R^N} g(x,u_n) v_n^1 \, dx \\ &= \|v_n^1\|^2 + \langle u_0, v_n^1 \rangle - \mu \int_{\R^N} \frac{u_n v_n^1}{|x|^\alpha} \, dx - \int_{\R^N} g(x,u_n) v_n^1 \, dx
\end{align*}
and therefore
$$
\|v_n^1\|^2 = \cJ'(u_n) (v_n^1) + \int_{\R^N} \left( g(x, u_n) - g(x,u_0) \right) v_n^1 \, dx + \mu \int_{\R^N} \frac{(u_n - u_0) v_n^1}{|x|^\alpha} \, dx.
$$
We have $\cJ'(u_n) (v_n^1) \to 0$, since $u_n$ is a Palais-Smale sequence. Moreover, the Vitali convergence theorem and the Lion's lemma (\cite{secchi}[Lemma 2.4]) imply
$$
\int_{\R^N} \left( g(x, u_n) - g(x,u_0) \right) v_n^1 \, dx \to 0.
$$
Hence
$$
\|v_n^1\|^2 = \mu \int_{\R^N} \frac{ |v_n^1|^2}{|x|^\alpha} \, dx + o(1).
$$
Recall that, for $\mu < \mu^*$ we have
$$
D_{N,\alpha,\mu} \|v_n^1\|^2 \leq \|v_n^1\|^2 - \mu \int_{\R^N} \frac{|v_n^1|^2}{|x|^\alpha} \, dx = o(1).
$$
Therefore $u_n \to u_0$ in $H^{\alpha / 2}(\R^N)$ and by the continuity of $\cJ$ we have $\cJ(u_n) \to \cJ(u_0)$.

\textbf{Step 3:} \textit{Suppose that there is a sequence $(z_n) \subset \mathbb{Z}^N$ such that
$$
\liminf_{n\to\infty} \int_{B(z_n, 1+\sqrt{N})} |v_n^1|^2 \, dx > 0.
$$
Then there is $w \in H^{\alpha / 2} (\R^N)$ such that (up to a subsequence):}
$$
(i) \ |z_n| \to \infty, \quad (ii) \ u_n(\cdot + z_n) \rightharpoonup w \neq 0, \quad (iii) \ \cJ_{\infty}' (w) = 0.
$$
(i) and (ii) are standard. Put $v_n = u_n (\cdot + z_n)$ and denote that
\begin{eqnarray*}
& & \cJ_{\infty}'(v_n)(\varphi) - \cJ_{\infty}' (w)(\varphi) = \\ &=& \langle v_n-w, \varphi \rangle - \int_{\R^N} \left( g(x,v_n) - g(x,w) \right) \varphi \, dx - \mu \int_{\R^N} \frac{(v_n-w)\varphi}{|x|^\alpha} \, dx - \int_{\R^N} V_{loc} (x) \left( v_n - w \right) \varphi \, dx.
\end{eqnarray*}
By the weak convergence $\langle v_n-w, \varphi \rangle \to 0$. By the Vitali convergence theorem
$$
\int_{\R^N} \left( g(x,v_n) - g(x,w) \right) \varphi \, dx \to 0.
$$
Take a measurable set $E \subset \mathrm{supp}\, \varphi$ and observe that
$$
\int_{\R^N} |V_{loc} (x) \left( v_n - w \right) \varphi | \chi_E \, dx \leq |V_{loc}|_\infty \int_{\mathrm{supp}\, \varphi} |v_n - w| |\varphi \chi_E| \, dx \leq |V_{loc}|_\infty |v_n - w|_2 |\varphi \chi_E|_2
$$
and therefore by the Vitali convergence theorem
$$
\int_{\R^N} V_{loc} (x) \left( v_n - w \right) \varphi \, dx \to 0.
$$
Hence
$$
\cJ_{\infty}'(v_n)(\varphi) - \cJ_{\infty}' (w)(\varphi) = - \mu \int_{\R^N} \frac{(v_n-w)\varphi}{|x|^\alpha} \, dx + o(1).
$$
Again, in view of the Hardy inequality (Lemma \ref{Lem:HardyIneq}), we may assume that $v_n \weakto w$ in $L^2 \left(\R^N, \frac{dx}{|x|^\alpha} \right)$, thus
$$
\left| \int_{\R^N} \frac{(v_n-w)\varphi}{|x|^\alpha} \, dx \right| \to 0.
$$
Thus $\cJ_{\infty}' (v_n) (\varphi) \to \cJ_{\infty}' (w) (\varphi)$. We need to show that $\cJ_{\infty}' (v_n) (\varphi) \to 0$. In this purpose observe that
$$
\cJ' (u_n) \left( \varphi (\cdot - z_n) \right) \to 0,
$$
since $(u_n)$ is a Palais-Smale sequence. On the other hand
\begin{eqnarray*}
0 &\leftarrow& \cJ' (u_n) \left( \varphi (\cdot - z_n) \right) \\
&=& \langle v_n, \varphi \rangle - \int_{\R^N} g(x, v_n) \varphi \, dx - \int_{\R^N} V_{loc} (x) v_n \varphi \, dx + \int_{\R^N} V_{loc} (x) u_n \varphi (\cdot - z_n) \, dx - \mu \int_{\R^N} \frac{u_n \varphi(\cdot - z_n)}{|x|^\alpha} \, dx \\
&=& \cJ'_{\infty}(v_n) (\varphi) + \int_{\R^N} V_{loc} (x+z_n) v_n \varphi \, dx - \mu \int_{\R^N} \frac{u_n \varphi(\cdot - z_n)}{|x|^\alpha} \, dx.
\end{eqnarray*}
By the Vitali convergence theorem we have
$$
\int_{\R^N} V_{loc} (x+z_n) v_n \varphi \, dx \to 0.
$$
Moreover, in view of the H\"older inequality, boundedness of $(u_n)$ in $L^2 \left(\R^N, \frac{dx}{|x|^\alpha} \right)$ and Lemma \ref{hardyLemma} we have
$$
\left| \int_{\R^N} \frac{u_n \varphi(\cdot - z_n)}{|x|^\alpha} \, dx \right| \leq \left( \int_{\R^N} \frac{u_n^2}{|x|^\alpha} \, dx \right)^{\frac{1}{2}} \left( \int_{\R^N} \frac{|\varphi(\cdot - z_n)|^2}{|x|^\alpha} \, dx \right)^{\frac{1}{2}} \to 0.
$$
Finally $\cJ'_{\infty}(v_n) (\varphi) \to 0$. Thus $\cJ'_{\infty}(w) = 0$.

\textbf{Step 4:} \textit{Suppose that there exist $m \geq 1$, $(y_n^k) \subset \mathbb{Z}^N$, $w^k \in H^{\alpha / 2} (\R^N)$ for $1\leq k\leq m$ such that
\begin{eqnarray*}
|y_n^k| \to \infty, \ |y_n^k - y_n^{k'}| \to \infty \quad for \ k \neq k', \\
u_n (\cdot + y_n^k) \to w^k \neq 0, \quad for \ each \ 1\leq k \leq m, \\
\cJ_{\infty}'(w^k)=0, \quad for \ each \ 1\leq k \leq m.
\end{eqnarray*}
Then,
\begin{itemize}
\item[(1)] if $\sup_{z \in \R^N} \int_{B(z,1)} \left| u_n - u_0 - \sum_{k=1}^m w^k (\cdot - y_n^k) \right|^2 \, dx \to 0$ as $n\to\infty$, then
$$
\left\| u_n - u_0 - \sum_{k=1}^m w^k (\cdot - y_n^k) \right\| \to 0;
$$
\item[(2)] if there is $(z_n) \subset \mathbb{Z}^N$ such that 
$$
\liminf_{n \to \infty} \int_{B(z_n, 1+\sqrt{N})} \left| u_n - u_0 - \sum_{k=1}^m w^k (\cdot - y_n^k) \right|^2 \, dx  > 0,
$$
then there is $w^{m+1} \in H^{\alpha / 2} (\R^N)$ such that (up to subsequences):
\begin{itemize}
\item[(i)] $|z_n| \to \infty$, \ $|z_n - y_n^k| \to \infty$, \ for \ $1 \leq k \leq m$,
\item[(ii)] $u_n(\cdot + z_n) \rightharpoonup w^{m+1} \neq 0,$
\item[(iii)] $\cJ_{\infty}'(w^{m+1}) = 0.$
\end{itemize}
\end{itemize}
}
\noindent Put $\xi_n =  u_n - u_0 - \sum_{k=1}^m w^k (\cdot - y_n^k)$. 
\begin{enumerate}
\item[(1)] In view of Lion's lemma (\cite{secchi}[Lemma 2.4]) $\xi_n \to 0$ in $L^r (\R^N)$. Note that
$$
\cJ ' (u_n) (\xi_n) = \| \xi_n\|^2 + \langle u_0, \xi_n \rangle + \sum_{k=1}^m \langle w^k (\cdot - y_n^k), \xi_n \rangle - \mu \int_{\R^N} \frac{u_n \xi_n}{|x|^\alpha} \, dx - \int_{\R^N} g(x,u_n) \xi_n \, dx
$$
and therefore
\begin{eqnarray*}
\| \xi_n \|^2 = \cJ'(u_n) (\xi_n) - \sum_{k=1}^m \langle w^k (\cdot - y_n^k), \xi_n \rangle + \int_{\R^N} ( g(x,u_n) - g(x,u_0) ) \xi_n \, dx + \mu \int_{\R^N} \frac{(u_n-u_0)\xi_n}{|x|^\alpha} \, dx.
\end{eqnarray*}
We have $\cJ'(u_n) (\xi_n) \to 0$, since $(u_n)$ is a Palais-Smale sequence. Since $\cJ_{\infty}'(w^k (\cdot - y_n^k)) = 0$ we have
$$
\langle w^k (\cdot - y_n^k), \xi_n \rangle = \int_{\R^N} g(x,w^k (\cdot - y_n^k)) \xi_n \, dx + \int_{\R^N} V_{loc} (x) w^k (\cdot - y_n^k) \xi_n \, dx + \mu \int_{\R^N} \frac{w^k (\cdot - y_n^k) \xi_n}{|x|^\alpha} \, dx
$$
and therefore
\begin{eqnarray*}
\| \xi_n \|^2 &=& o(1) - \sum_{k=1}^m \int_{\R^N} g(x,w^k) \xi_n (\cdot + y_n^k) \, dx - \sum_{k=1}^m \int_{\R^N} V_{loc}(x) w^k (\cdot - y_n^k) \xi_n \, dx \\ &+& \int_{\R^N} ( g(x,u_n) - g(x,u_0) ) \xi_n \, dx + \mu \int_{\R^N} \frac{(u_n-u_0)\xi_n}{|x|^\alpha} \, dx - \mu \sum_{k=1}^m \int_{\R^N} \frac{w^k (\cdot - y_n^k) \xi_n}{|x|^\alpha} \, dx.
\end{eqnarray*}
We can easily show that
$$
- \sum_{k=1}^m \int_{\R^N} g(x,w^k) \xi_n (\cdot + y_n^k) \, dx - \sum_{k=1}^m \int_{\R^N} V_{loc}(x) w^k (\cdot - y_n^k) \xi_n \, dx + \int_{\R^N} ( g(x,u_n) - g(x,u_0) ) \xi_n \, dx \to 0,
$$
hence
$$
\|\xi_n\|^2 = \mu \int_{\R^N} \frac{(u_n-u_0)\xi_n}{|x|^\alpha} \, dx - \mu \sum_{k=1}^m \int_{\R^N} \frac{w^k (\cdot - y_n^k) \xi_n}{|x|^\alpha} \, dx + o(1) = \mu \int_{\R^N} \frac{|\xi_n|^2}{|x|^\alpha} \, dx + o(1).
$$
Since $\mu < \mu^*$ we have
$$
D_{N,\alpha,\mu} \|\xi_n\|^2 \leq \|\xi_n\|^2 - \mu \int_{\R^N} \frac{|\xi_n|^2}{|x|^\alpha} \, dx = o(1)
$$
and therefore
$$
\xi_n \to 0 \quad \mathrm{in} \ H^{\alpha / 2} (\R^N).
$$
\item[(2)] Suppose that
$$
\liminf_{n\to\infty} \int_{B(z_n, 1+\sqrt{N})} \left| u_n - u_0 - \sum_{k=1}^m w^k (\cdot - y_n^k) \right|^2 \, dx > 0.
$$
Then (i) and (ii) hold as in Step 3. Put $v_n = u_n (\cdot + z_n)$. Then for $\varphi \in \cC_0^\infty (\R^N)$ we have
$$
\cJ_{\infty}'(v_n) (\varphi) - \cJ_{\infty}' (w^{m+1}) (\varphi) \to 0
$$
and $\cJ_{\infty}'(v_n)(\varphi) \to 0$ as in Step 3.
\end{enumerate}

\textbf{Step 5:} \textit{Conclusion.} \\
In view of Step 1, we know that $u_n \rightharpoonup u_0$ and $\cJ'(u_0) = 0$, which completes the proof of (a). If condition \eqref{eq:step2} from Step 2 holds, then $u_n \to u_0$ and theorem is true for $\ell = 0$. On the other hand, one has
$$
\liminf_{n\to\infty} \int_{B(y_n,1)} |v_n^1|^2 \, dx > 0
$$
for some $(y_n) \subset \R^N$. For each $y_n \in \R^N$ we may find $z_n \in \mathbb{Z}^N$ such that
$$
B(y_n, 1) \subset B(z_n, 1+\sqrt{N}).
$$
Then
$$
\liminf_{n\to\infty} \int_{B(z_n, 1+\sqrt{N})} |v_n^1|^2 \, dx \geq \liminf_{n\to\infty}\int_{B(y_n,1)} |v_n^1|^2 \, dx > 0.
$$
Therefore in view of Step 3 we find $w$ such that (i)--(iii) hold. Let $y_n^1 = z_n$ and $w^1 = w$. If (1) from Step 4 holds with $m=1$, then (b)--(d) are true. Otherwise (2) holds and we put $(y_n^2) = (z_n)$ and $w^2=w$. Then we iterate the Step 4. To complete the proof of (b)--(d) it is sufficient to show that this procedure will finish after a finite number of steps. Indeed, observe that
$$
\lim_{n\to\infty} \|u_n\|^2-\|u_0\|^2-\sum_{k=1}^m\|w^k\|^2 = \lim_{n\to\infty} \left\| u_n - u_0 - \sum_{k=1}^m w^k (\cdot - y_n^k) \right\|^2 \geq 0
$$
for each $m \geq 1$. Since $w^k$ are critical points of $\cJ_{\infty}$, there is $\rho_0 > 0$ such that $\|w^k\| \geq \rho_0 > 0$, so after a finite number of steps, say $\ell$ steps, condition (1) in Step 4 will hold.

\textbf{Step 6:} \textit{We will show that $(e)$ holds:}
$$
\cJ(u_n)\to \cJ(u_0) + \sum_{k=1}^\ell \cJ_{\infty} (w^k).
$$
Note that
\begin{eqnarray*}
\cJ(u_n) &=& \cJ (u_0) + \cJ (u_n - u_0) + \frac12 \langle u_n - u_0, u_0 \rangle + \frac12 \langle u_0, u_n-u_0 \rangle \\
&+& \int_{\R^N} \left[ G(x, u_n-u_0) + G(x,u_0) - G(x,u_n) \right] \, dx - \mu \int_{\R^N} \frac{(u_n-u_0)u_0}{|x|^\alpha} \, dx.
\end{eqnarray*}
Since $u_n \rightharpoonup u_0$ we have
$$
\frac12 \langle u_n - u_0, u_0 \rangle \to 0, \quad \frac12 \langle u_0, u_n-u_0 \rangle \to 0.
$$
Thus
$$
\cJ(u_n) = \cJ (u_0) + \cJ (u_n - u_0) + \int_{\R^N} \left[ G(x, u_n-u_0) + G(x,u_0) - G(x,u_n) \right] \, dx - \mu \int_{\R^N} \frac{(u_n-u_0)u_0}{|x|^\alpha} \, dx + o(1).
$$
Let us consider the function $H : \R^N \times [0,1] \rightarrow \R$ given by $H(x,t)=G(x,u_n-tu_0)$. Therefore
$$
G(x,u_n-u_0)-G(x,u_n) = H(x,1)-H(x,0) = \int_0^1 \frac{\partial H}{\partial s} (x,s) \, ds.
$$
Note that
\begin{eqnarray*}
\int_{\mathbb{R}^N} [ G(x,u_n-u_0) + G(x,u_0) - G(x,u_n)] \, dx &=& \int_{\R^N} \left[ \int_0^1 \frac{\partial H}{\partial s} (x,s) \, ds + G(x,u_0) \right] \, dx \\
&=& \int_{\R^N} \int_0^1 \frac{\partial H}{\partial s} (x,s) \, ds \, dx + \int_{\R^N}G(x,u_0)\, dx\\
&=& \int_0^1 \int_{\R^N} -g(x,u_n-su_0)u_0  \, dx \, ds + \int_{\R^N}G(x,u_0)\, dx.
\end{eqnarray*}
Let $E \subset \R^N$ be a measurable set. From the H\"older inequality we have
\begin{eqnarray*}
\int_{E} |g(x, u_n-su_0)u_0| \, dx &\leq& C \int_E |u_n-su_0| |u_0| \, dx + C \int_E |u_n-su_0|^{r-1} |u_0| \, dx \\
&\leq& C |(u_n-su_0)\chi_E|_2^2 |u_0\chi_E|_2^2 + C|(u_n - su_0)\chi_E|_r^{r-1} |u_0 \chi_E|_r.
\end{eqnarray*}
Then $(g(x, u_n-su_0)u_0)$ is uniformly integrable and by the Vitali convergence theorem we get
$$
\int_0^1 \int_{\R^N} -g(x,u_n-su_0)u_0  \, dx \, ds \to \int_0^1 \int_{\R^N} -g(x,u_0-su_0)u_0  \, dx \, ds.
$$
On the other hand we we have
\begin{eqnarray*}
\int_0^1 \int_{\R^N} -g(x,u_0-su_0)u_0  \, dx \, ds &=&  \int_{\R^N} \int_0^1 -g(x,u_0-su_0)u_0  \, ds \, dx \\
&=& \int_{\R^N} \int_0^1 \frac{\partial}{\partial s} \left[ G(x, u_0-su_0) \right] \, ds \, dx \\
&=& \int_{\R^N} G(x,0)-G(x,u_0) \, dx = \int_{\R^N} -G(x,u_0) \, dx.
\end{eqnarray*}
Finally
$$
\int_{\mathbb{R}^N} [ G(x,u_n-u_0) + G(x,u_0) - G(x,u_n)] \, dx \to \int_{\mathbb{R}^N} [ G(x,u_0) - G(x,u_0)] \, dx = 0.
$$
We obtained that
$$
\cJ(u_n) = \cJ (u_0) + \cJ_{\infty} (u_n - u_0) + \frac{1}{2} \int_{\R^N} V_{loc} (x) (u_n - u_0)^2 \, dx - \mu \int_{\R^N} \frac{(u_n-u_0)u_0}{|x|^\alpha} \, dx + o(1).
$$
For a measurable set $E \subset \mathbb{R}^N$ there holds
$$
\int_E |V_{loc}(x)| |u_n-u_0|^2 \, dx \leq |V_{loc} \chi_E|_{s} |u_n - u_0|_{\frac{2s}{s-1}}^2,
$$
where $s \geq \frac{N}{\alpha}$. While $\frac{2s}{s-1} \geq 2$ and $\frac{2s}{s-1} = \frac{2}{1-\frac{1}{s}} \leq \frac{2}{1-\frac{\alpha}{N}} = \frac{2N}{N-\alpha}$, we have that $(u_n - u_0)$ is bounded in $L^{\frac{2s}{s-1}} (\R^N)$ and in view of the Vitali convergence theorem we have
$$
\int_{\R^N} V_{loc} (x) (u_n - u_0)^2 \, dx \to 0.
$$
Hence
$$
\cJ(u_n) = \cJ (u_0) + \cJ_{\infty} (u_n - u_0) - \mu \int_{\R^N} \frac{(u_n-u_0)u_0}{|x|^\alpha} \, dx + o(1).
$$
Since $u_n \weakto u_0$ in $L^2 \left( \R^N, \frac{dx}{|x|^\alpha} \right)$, we have
$$
\int_{\R^N} \frac{(u_n-u_0)u_0}{|x|^\alpha} \, dx \to 0.
$$
Hence
$$
\cJ(u_n) = \cJ (u_0) + \cJ_{\infty} (u_n - u_0) + o(1).
$$
We obtain that $\cJ_{\infty} (u_n - u_0) = \sum_{k=1}^\ell \cJ_{\infty} (w^k (\cdot - y_n^k)) + o(1)$ in the same way. Thus
\begin{eqnarray*}
\cJ(u_n) &=& \cJ (u_0) + \sum_{k=1}^\ell \cJ_{\infty} (w^k (\cdot - y_n^k)) + o(1) \\
&=& \cJ (u_0) + \sum_{k=1}^\ell \cJ_{\infty} (w^k) - \frac{\mu}{2} \sum_{k=1}^\ell \int_{\R^N} \frac{|w^k (\cdot - y_n^k)|^2}{|x|^\alpha} \, dx + \frac{\mu}{2} \sum_{k=1}^\ell \int_{\R^N} \frac{|w^k |^2}{|x|^\alpha} \, dx + o(1).
\end{eqnarray*}
From Lemma \ref{hardyLemma} we have
$$
\frac{\mu}{2} \sum_{k=1}^\ell \int_{\R^N} \frac{|w^k (\cdot - y_n^k)|^2}{|x|^\alpha} \, dx  \to 0
$$
and therefore
$$
\cJ(u_n) = \cJ (u_0) + \sum_{k=1}^\ell \cJ_{\infty} (w^k) + \frac{\mu}{2} \sum_{k=1}^\ell \int_{\R^N} \frac{|w^k |^2}{|x|^\alpha} \, dx + o(1).
$$
\end{proof}

\section{Existence of solutions}

\begin{proof}[Proof of Theorem \ref{ThMain1}]
From Theorem \ref{ThSetting} there is a bounded minimizing sequence $(u_n) \subset \cN$ such that
$$
\cJ'(u_n) \to 0, \quad \cJ(u_n) \to c,
$$
where
$$
c = \inf_{\cN} \cJ > 0.
$$
Suppose that $V_{loc} \equiv 0$. Then $\cJ = \cJ_{\infty}$ and in view of Theorem \ref{ThDecomposition} we have
$$
c \leftarrow \cJ(u_n) \to \cJ (u_0) + \sum_{k=1}^\ell \cJ (w^k) + \frac{\mu}{2} \sum_{k=1}^\ell \int_{\R^N} \frac{|w^k |^2}{|x|^\alpha} \, dx \geq \cJ(u_0) + \ell c.
$$
If $u_0 \neq 0$ we obtain $c \geq (\ell + 1 ) c$ and $\ell = 0$, thus $u_0$ is a ground state solution. If $u_0 = 0$ we obtain $\cJ(u_0) = \cJ(0)=0$ and $c \geq \ell c$. Since $c > 0$, we have $\ell = 1$ and $w^k \neq 0$ is a ground state.

\noindent Suppose now that $V_{loc} < 0$. Denote $c_{\infty} = \inf_{\cN_{\infty}} \cJ_{\infty} > 0$. As in \cite{BieganowskiMederski} we can show that $c_{\infty} > c$. Indeed, take a critical point $u_\infty \neq 0$ of $\cJ_\infty$ such that $\cJ_\infty (u_\infty) = c_\infty$. Let $t > 0$ be such that $tu_\infty \in \cN$. While $V(x) < V_{loc} (x)$, we obtain
$$
c_\infty = \cJ_\infty (u_\infty) \geq \cJ_\infty(t u_\infty) > \cJ (t u_\infty) \geq c > 0.
$$
Then
$$
c \leftarrow \cJ(u_n) \to \cJ(u_0) + \sum_{k=1}^\ell \cJ_{\infty} (w^k) + \frac{\mu}{2} \sum_{k=1}^\ell \int_{\R^N} \frac{|w^k |^2}{|x|^\alpha} \, dx \geq \cJ(u_0) + \ell c_{\infty}.
$$
Since $c_{\infty} > c$, we have $\ell = 0$ and $u_0 \neq 0$ is a ground state solution.
\end{proof}

\section{Nonexistence of ground states}

\begin{proof}[Proof of Theorem \ref{ThMain2}]
Suppose that $u \in \cN$ is a ground state solution of \eqref{eq}. Denote by $\cJ_{per}$ the energy functional with $\mu = 0$ and $V_{loc} \equiv 0$, and let $\cN_{per}$ be the corresponding Nehari manifold. Let $t > 0$ be such that $tu \in \cN_0$. Then
$$
c_{per} := \inf_{\cN_{per}} \cJ_{per} \leq \cJ_{per} (t u) = \cJ (t u) - \frac{1}{2} \int_{\R^N} V_{loc}(x) |tu|^2 \, dx + \frac{\mu}{2} \int_{\R^N} \frac{|tu|^2}{|x|^\alpha} \, dx < \cJ(tu) \leq \cJ(u) =:c.
$$
Fix $z \in \mathbb{Z}^N$ and $u_{per} \in \cN_{per}$. Then there is $t(z) > 0$ such that $t(z) u_{per}(\cdot - z) \in \cN$. Observe that
\begin{align*}
\frac{1}{|t(z)|^{q-2}} \left( \|u_0\|^2 - \mu \int_{\R^N} \frac{|u_0 (x-z)|^2}{|x|^\alpha} \, dx \right) &= \frac{1}{|t(z)|^{q}} \int_{\R^N} f(x, t(z)u_0) t(z) u_0 \, dx - \frac{1}{|t(z)|^{q}} \int_{\R^N} K(x) |t(z)|^q |u_0|^q \, dx \\
&\geq \frac{1}{|t(z)|^q} \int_{\R^N} q F(x, t(z)u_0) \, dx -  \int_{\R^N} K(x) |u_0|^q \, dx \\
&= q \int_{\R^N} \frac{ F(x, t(z)u_0)}{|t(z)|^q} \, dx -  \int_{\R^N} K(x) |u_0|^q \, dx.
\end{align*}
The right hand side tends to $\infty$ as $t(z) \to \infty$, while the left hand side stays bounded. Hence $(t(z))$ is bounded if $|z| \to \infty$. Hence, take any sequence $(z_n) \subset \mathbb{Z}^N$ such that $|z_n| \to \infty$. We may assume that $t(z_n) \to t_0$ as $n \to \infty$ and $t_0 \geq 0$. Observe that, in view of Lemma \ref{hardyLemma},
\begin{align*}
& \cJ_{per} (u_{per}) = \cJ_{per} ( u_{per} (\cdot - z)) \geq \cJ_{per} (t(z) u_{per} (\cdot - z)) \\
&= \cJ (t(z) u_{per}(\cdot - z)) - \frac{|t(z)|^2}{2} \int_{\R^N} V_{loc} (x) |u_{per}(x - z)|^2 \, dx + \frac{\mu |t(z)|^2}{2} \int_{\R^N} \frac{|u_{per}(x - z)|^2}{|x|^\alpha} \, dx \\
&\geq c - \frac{|t(z)|^2}{2} \int_{\R^N} V_{loc} (x+z) |u_{per}|^2 \, dx + \frac{\mu |t(z)|^2}{2} \int_{\R^N} \frac{|u_{per}(x - z)|^2}{|x|^\alpha} \, dx \\
 &\geq c + o(1).
\end{align*}
Taking infimum over $u_{per} \in \cN_{per}$ we obtain $c_{per} < c \leq c_{per}$ - a contradiction.
\end{proof}

\section{Asymptotic behaviour of ground states as $K_n \to 0$}
\label{sect:AsymptoticGamma}

Let $(K_n)$ be a sequence of functions such that for every $K_n$ the condition ($K$) holds and $K_n \to 0$ in $L^\infty (\R^N)$. Denote by $\cJ_n$ the energy functional for $K \equiv K_n$.

\begin{Lem}\label{Lem51}
Suppose that (F1), (F2) and ($V$) hold. Then there is a positive radius $r > 0$ such that
$$
a := \inf_{n \geq 1} \inf_{\|u\|_\mu = r} \cJ_n (u) > 0. 
$$
\end{Lem}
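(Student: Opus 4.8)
The plan is to exploit the fact that the only dependence of $\cJ_n$ on $n$ enters through a nonnegative term, so that a single, $n$-independent lower bound can be produced. Recall that, by the definition of the energy functional with $K\equiv K_n$,
$$
\cJ_n(u) = \frac12\|u\|_\mu^2 - \int_{\R^N} F(x,u)\,dx + \frac{1}{q}\int_{\R^N} K_n(x)|u|^q\,dx.
$$
Since each $K_n$ satisfies $(K)$, in particular $K_n(x)\geq 0$ a.e., the last integral is nonnegative, so
$$
\cJ_n(u)\;\geq\;\frac12\|u\|_\mu^2 - \int_{\R^N} F(x,u)\,dx
$$
for every $n\geq 1$. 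The right-hand side no longer depends on $n$, which is precisely what makes the infimum over $n$ harmless; I therefore only need the usual mountain-pass-type estimate near the origin for the $n$-free functional, exactly as in the verification of (J1).

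Concretely, I would first invoke $(F1)$ and $(F2)$, which by integration of \eqref{eps} give, for every $\varepsilon>0$, a constant $C_\varepsilon>0$ with $F(x,u)\leq \varepsilon|u|^2 + C_\varepsilon|u|^p$ for all $u$ and a.e. $x$. Using the continuous Sobolev embeddings $H^{\alpha/2}(\R^N)\hookrightarrow L^2(\R^N)$ and $H^{\alpha/2}(\R^N)\hookrightarrow L^p(\R^N)$ (valid since $2<p<2^*_\alpha$) together with the norm equivalence of Lemma \ref{norm-eqv}, there is $C_\mu>0$ such that
$$
\int_{\R^N} F(x,u)\,dx \;\leq\; C_\mu\bigl(\varepsilon\|u\|_\mu^2 + C_\varepsilon\|u\|_\mu^p\bigr).
$$
Fixing $\varepsilon$ so small that $C_\mu\varepsilon\leq \tfrac14$ yields $\cJ_n(u)\geq \tfrac14\|u\|_\mu^2 - C_\mu C_\varepsilon\|u\|_\mu^p$, uniformly in $n$. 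For $\|u\|_\mu = r$ this reads $\cJ_n(u)\geq \tfrac14 r^2 - C_\mu C_\varepsilon r^p$, and since $p>2$ the term $r^p$ is of higher order in $r$; choosing $r>0$ small enough makes the bound strictly positive. Taking the double infimum then gives $a\geq \tfrac14 r^2 - C_\mu C_\varepsilon r^p>0$.

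I do not expect a genuine obstacle here: the content of the lemma is really the uniformity in $n$, and that is obtained for free from the sign condition $K_n\geq 0$ in $(K)$, so that the only $n$-dependent contribution to $\cJ_n$ can simply be discarded in the lower bound. It is worth emphasizing that the hypothesis $K_n\to 0$ in $L^\infty(\R^N)$ is not used in this step; only the nonnegativity of $K_n$ matters, and the remaining argument is a verbatim repetition of the estimate establishing (J1). Care should be taken only to record that the radius $r$ and the constants $C_\mu,C_\varepsilon$ depend on $\mu$ (which is fixed throughout this section) but not on $n$, which is exactly the assertion to be proved.
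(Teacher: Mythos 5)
Your proposal is correct and follows essentially the same route as the paper's proof: both drop the nonnegative term $\frac1q\int_{\R^N} K_n(x)|u|^q\,dx$ using $K_n\geq 0$, bound $\int_{\R^N} F(x,u)\,dx$ via the estimate $F(x,u)\leq \varepsilon|u|^2+C_\varepsilon|u|^p$ from (F1)--(F2), apply the Sobolev embeddings together with the norm equivalence of Lemma \ref{norm-eqv}, and choose $r$ small to get a uniform positive lower bound on the sphere $\|u\|_\mu=r$. The only difference is bookkeeping (you fix $\varepsilon$ with $C_\mu\varepsilon\leq\frac14$ and then shrink $r$, while the paper absorbs both terms into $\frac14\|u\|_\mu^2$ for $\|u\|_\mu\leq r$), which is immaterial.
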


\begin{proof}
Fix $\varepsilon > 0$. From \eqref{eps} we have
$$
F(x,u) \leq \varepsilon |u|^2 + C_\varepsilon |u|^p
$$
for some $C_\varepsilon > 0$. Therefore, in view of Sobolev inequalities,
$$
\int_{\R^N} F(x,u) \, dx - \int_{\R^N} K_n(x) |u|^q \, dx \leq \int_{\R^N} F(x,u) \, dx \leq C \left( \varepsilon \|u\|_\mu^2 + C_\varepsilon \|u\|_\mu^p \right)
$$
for some $C > 0$. Hence, there is $r > 0$ such that
$$
\int_{\R^N} F(x,u) \, dx - \int_{\R^N} K_n(x) |u|^q \, dx \leq \frac{1}{4} \| u\|_\mu^2
$$
for $\|u\|_\mu \leq r$. Hence
$$
\cJ_n (u) \geq \frac{r^2}{4}
$$
for $\|u\|_\mu = r$.
\end{proof}

Recall that for any $n \geq 1$ there is a ground state solution $u_n$ in the corresponding Nehari manifold $\cN_n$ (Theorem \ref{ThMain1}). By $\cJ_0$ and $\cN_0$ we denote the energy functional and the corresponding Nehari manifold for the problem with $K \equiv 0$. In view of Theorem \ref{ThMain1} there is a ground state also for $\cJ_0$.

\begin{Lem}\label{Lem52}
Assume that (F1)--(F4) and (V) hold. We have
$$
\lim_{n\to\infty} \inf_{\cN_n} \cJ_n = \inf_{\cN_0} \cJ_0.
$$
\end{Lem}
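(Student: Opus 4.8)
The plan is to prove the equality $\lim_{n\to\infty}\inf_{\cN_n}\cJ_n=\inf_{\cN_0}\cJ_0$ by establishing the two inequalities $\limsup_{n\to\infty}\inf_{\cN_n}\cJ_n\leq\inf_{\cN_0}\cJ_0$ and $\liminf_{n\to\infty}\inf_{\cN_n}\cJ_n\geq\inf_{\cN_0}\cJ_0$ separately. Throughout I will write $c_n:=\inf_{\cN_n}\cJ_n$ and $c_0:=\inf_{\cN_0}\cJ_0$, and I will use repeatedly the fact (recorded just after Theorem~\ref{ThSetting}) that for each functional and each $u\neq 0$ there is a \emph{unique} $t>0$ with $tu$ on the corresponding Nehari manifold, together with the characterization of the minimum via the fibering map.

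For the upper bound, I would start from a ground state $u_0\in\cN_0$ for $\cJ_0$, so $\cJ_0(u_0)=c_0$. For each $n$ let $t_n>0$ be the unique number with $t_nu_0\in\cN_n$; then $c_n\leq\cJ_n(t_nu_0)$. The key observation is that, since $K_n\to 0$ in $L^\infty(\R^N)$, the Nehari-defining equation for $t_nu_0$ differs from that for $u_0$ only by the term $\int_{\R^N}K_n(x)|t_nu_0|^q\,dx$, which is controlled by $|K_n|_\infty\,t_n^q\,|u_0|_q^q\to 0$ once $(t_n)$ is shown bounded; an argument like the one in the proof of Theorem~\ref{ThMain2} (using (F3) to force $t_n$ to stay bounded) gives $t_n\to 1$. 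Then $\cJ_n(t_nu_0)=\cJ_0(t_nu_0)+\tfrac1q\int_{\R^N}K_n(x)|t_nu_0|^q\,dx\to\cJ_0(u_0)=c_0$ by continuity of the fibering map and the vanishing of the $K_n$-term, yielding $\limsup_n c_n\leq c_0$.

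For the lower bound, I would take the ground states $u_n\in\cN_n$ with $\cJ_n(u_n)=c_n$. The first task is to show $(u_n)$ is bounded in $H^{\alpha/2}(\R^N)$; since $K_n\geq 0$ drops out favorably, the coercivity computation in the proof of (J4) gives $c_n=\cJ_n(u_n)\geq(\tfrac12-\tfrac1q)\|u_n\|_\mu^2$, and as $(c_n)$ is bounded above by the already-established $\limsup$, boundedness follows. Let $s_n>0$ be the unique number with $s_nu_n\in\cN_0$; then $c_0\leq\cJ_0(s_nu_n)=\cJ_n(s_nu_n)+\tfrac1q\int_{\R^N}K_n(x)|s_nu_n|^q\,dx$. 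One shows $(s_n)$ stays bounded (again via (F3) applied to the bounded, non-vanishing $u_n$, noting $\|u_n\|_\mu\geq\rho>0$ from Lemma~\ref{Lem51}), so $|K_n|_\infty\,s_n^q\,|u_n|_q^q\to 0$; and since $s_nu_n\in\cN_0$ while $u_n\in\cN_n$ with $K_n\to0$, a continuity-of-the-fibering-map argument forces $s_n\to 1$, hence $\cJ_n(s_nu_n)=\cJ_n(u_n)+o(1)=c_n+o(1)$. Combining gives $c_0\leq c_n+o(1)$, i.e.\ $\liminf_n c_n\geq c_0$.

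The main obstacle I anticipate is the uniform control of the rescaling parameters $t_n$ and $s_n$: I must rule out degeneration to $0$ or blow-up to $\infty$ \emph{uniformly in $n$}, which is what makes the two fibering maps asymptotically coincide. For the upper bound this is mild since $u_0$ is fixed. For the lower bound it is the delicate point, because $u_n$ itself varies; the argument requires combining the uniform lower bound $\|u_n\|_\mu\geq\rho$ (Lemma~\ref{Lem51} together with the $\cN_n$ identity), the uniform upper bound on $\|u_n\|_\mu$ from coercivity, and the superquadratic growth condition (F3) to pin $s_n$ into a compact subset of $(0,\infty)$. Once $s_n$ and $t_n$ are trapped away from $0$ and $\infty$ and the $K_n$-terms are shown to vanish, the continuity of $t\mapsto\cJ_0(tu)$ and $t\mapsto\cJ_n(tu)$ uniformly on compact $t$-intervals closes both estimates.
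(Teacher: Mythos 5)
Your overall strategy --- comparing $c_n:=\inf_{\cN_n}\cJ_n$ and $c_0:=\inf_{\cN_0}\cJ_0$ by projecting each ground state onto the other Nehari manifold and controlling the projection parameters via (F3) --- is the same as the paper's, and your upper-bound half is essentially the paper's second inequality: the paper likewise proves boundedness of the multiplier $t_n'$ attached to the \emph{fixed} function $u_0$ by dividing the Nehari identity by $|t_n'|^{q-1}$ and invoking (F3). But your lower-bound half has two problems. First, a sign slip: since $\cJ_n(u)=\cJ_0(u)+\frac1q\int_{\R^N}K_n(x)|u|^q\,dx$, you should have $\cJ_0(s_nu_n)=\cJ_n(s_nu_n)-\frac{s_n^q}{q}\int_{\R^N}K_n(x)|u_n|^q\,dx$, with a minus sign; in fact the correct sign works in your favor, since the subtracted term is nonnegative by ($K$). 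Second, and this is a genuine gap: your argument hinges on $s_n\to1$, justified only by ``a continuity-of-the-fibering-map argument.'' The map $u\mapsto t(u)$ is continuous, but your base points $u_n$ vary and need not converge --- no compactness is available at this stage of the proof --- so continuity of the fibering map gives nothing; and (F3) at best traps $(s_n)$ in a compact subset of $(0,\infty)$ (even the upper bound for $s_n$ along a \emph{varying} sequence is delicate: $\|u_n\|_\mu\geq\rho$ alone does not prevent $(u_n)$ from spreading out, so a Fatou-type blow-up argument needs a nonvanishing-plus-translation step you do not supply). The claim $s_n\to1$ can be repaired using (F4): subtracting the identities for $u_n\in\cN_n$ and $s_nu_n\in\cN_0$ gives $\int_{\R^N}\bigl(f(x,u_n)u_n-s_n^{-1}f(x,s_nu_n)u_n\bigr)\,dx=\int_{\R^N}K_n(x)|u_n|^q\,dx\to0$, and the monotonicity of $u\mapsto f(x,u)/|u|^{q-1}$ together with $0<\rho\leq\|u_n\|_\mu\leq M$ then forces $s_n\to1$ --- but all of this extra work is unnecessary.

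The paper sidesteps the issue with the ray-maximality property built into the Nehari setting: by (J3), every $v\in\cN$ satisfies $\cJ(v)=\max_{t>0}\cJ(tv)$ for the corresponding functional. Hence, with $t_nu_n\in\cN_0$,
$$
c_n=\cJ_n(u_n)\geq\cJ_n(t_nu_n)=\cJ_0(t_nu_n)+\frac{t_n^q}{q}\int_{\R^N}K_n(x)|u_n|^q\,dx\geq c_0,
$$
which is your lower bound in one line, with no control of $t_n$ needed at all; symmetrically, $c_0=\cJ_0(u_0)\geq\cJ_0(t_n'u_0)\geq c_n-\frac{(t_n')^q}{q}\int_{\R^N}K_n(x)|u_0|^q\,dx$, so that only boundedness of the single sequence $(t_n')$ for the fixed $u_0$ --- exactly the (F3) argument you sketched, which is valid precisely because $u_0$ does not move --- is required, and $|K_n|_\infty\to0$ closes the sandwich. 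You should restructure your proof around this maximality observation; as written, the lower bound does not go through.
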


\begin{proof}
Let $c_n = \inf_{\cN_n} \cJ_n > 0$ and $c_0 = \inf_{\cN_0} \cJ_0 > 0$. From Theorem \ref{ThMain1} there are $u_n \in \cN_n$ and $u_0 \in \cN_0$ such that
$$
\cJ_n (u_n) = c_n, \quad \cJ_0 (u_0) = c_0.
$$
Take $t_n > 0$ such that $t_n u_n \in \cN_0$ and observe that 
$$
c_n = \cJ_n (u_n) \geq \cJ_n (t_n u_n) = \cJ_0 (t_n) + t_n^q \int_{\R^N} K_n(x) |u_n|^q \, dx \geq c_0 + t_n^q \int_{\R^N} K_n(x) |u_n|^q \, dx.
$$
Now, take $t_n ' > 0$ such that $t_n ' u_0 \in \cN_n$ and note that
$$
c_0 = \cJ_0(u_0) \geq \cJ_0 (t_n' u_0) = \cJ_n (t_n' u_0) - (t_n')^q \int_{\R^N} K_n (x) |u_0|^q \, dx \geq c_n - (t_n')^q \int_{\R^N} K_n (x) |u_0|^q \, dx.
$$
Hence
\begin{equation}\label{ineq}
c_0 \leq c_0 + t_n^q \int_{\R^N} K_n(x) |u_n|^q \, dx \leq c_n \leq c_0 + (t_n')^q \int_{\R^N} K_n (x) |u_0|^q \, dx.
\end{equation}
We will show that $(t_n ')$ is bounded. Suppose by contradiction that, up to a subsequence, $t_n' \to \infty$. Since $t_n ' u_0 \in \cN_n$ we have that
$$
\| t_n ' u_0 \|_\mu^2 - \int_{\R^N} f(x, t_n' u_0) t_n' u_0 \, dx + \int_{\R^N} K_n (x) |t_n' u_0|^q \, dx = 0.
$$
Thus
$$
0 = \frac{\|u_0\|^2}{|t_n'|^{q-2}} - \int_{\R^N} \frac{f(x, t_n' u_0)u_0}{|t_n'|^{q-1}} \, dx + \int_{\R^N} K_n (x) | u_0|^q \, dx = o(1) - \int_{\R^N} \frac{f(x, t_n' u_0)u_0}{|t_n'|^{q-1}} \, dx \to -\infty.
$$
This contradiction shows that $(t_n')$ is bounded and therefore in view of \eqref{ineq}
$$
c_n \to c_0.
$$
\end{proof}

\begin{Lem}\label{Lem53}
For every choice of ground states $u_n$ of $\cJ_n$, the sequence $(u_n)$ is bounded in $H^{\alpha / 2} (\R^N)$.
\end{Lem}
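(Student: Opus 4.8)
The plan is to derive a lower bound of the form $\cJ_n(u_n) \ge C\|u_n\|_\mu^2$ with $C>0$ independent of $n$, and then invoke the convergence $c_n \to c_0$ from Lemma \ref{Lem52}. The decisive point will be that the terms involving $K_n$ drop out of this estimate, so that the constant $C$ does not depend on $K_n$ at all.

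First, recall from Lemma \ref{Lem52} that $c_n := \cJ_n(u_n) = \inf_{\cN_n}\cJ_n \to \inf_{\cN_0}\cJ_0 = c_0$; in particular the sequence $(c_n)$ is bounded. Next, since $u_n \in \cN_n$ we have $\cJ_n'(u_n)(u_n) = 0$, and I would exploit this by writing $\cJ_n(u_n) = \cJ_n(u_n) - \frac{1}{q}\cJ_n'(u_n)(u_n)$. A direct computation gives
$$\cJ_n(u_n) = \left(\frac{1}{2} - \frac{1}{q}\right)\|u_n\|_\mu^2 + \int_{\R^N}\left(\frac{1}{q}f(x,u_n)u_n - F(x,u_n)\right)dx,$$
in which the contribution $\frac{1}{q}\int_{\R^N} K_n|u_n|^q\,dx$ coming from $\cJ_n(u_n)$ is cancelled exactly by the corresponding term in $-\frac{1}{q}\cJ_n'(u_n)(u_n)$. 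This cancellation is what makes the argument uniform in $n$.

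Finally, exactly as in the verification of (J4), assumption (F3) yields $f(x,u)u \ge qF(x,u)$ for every $u$, so the integral above is nonnegative and
$$\left(\frac{1}{2} - \frac{1}{q}\right)\|u_n\|_\mu^2 \le \cJ_n(u_n) = c_n.$$
Since $q > 2$ the prefactor is a fixed positive constant and $(c_n)$ is bounded, so $(\|u_n\|_\mu)$ is bounded; boundedness of $(u_n)$ in $H^{\alpha/2}(\R^N)$ then follows from the equivalence of $\|\cdot\|_\mu$ with the standard norm established in Lemma \ref{norm-eqv}. I do not anticipate any genuine difficulty here: the only subtlety is recognizing that the Nehari identity eliminates the $K_n$-dependence, which is precisely why the same coercivity constant works for all $n$.
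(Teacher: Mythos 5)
Your proof is correct and is essentially the paper's own argument: the paper also evaluates $\cJ_n(u_n)-\frac{1}{q}\cJ_n'(u_n)(u_n)$ on the Nehari manifold, where the $K_n$-terms cancel exactly and the inequality $f(x,u)u\geq qF(x,u)$ yields $\left(\frac{1}{2}-\frac{1}{q}\right)\|u_n\|_\mu^2\leq c_n$, with $(c_n)$ bounded thanks to Lemma \ref{Lem52}. The only cosmetic difference is that the paper phrases this as a contradiction under the assumption $\|u_n\|_\mu\to\infty$, whereas you state the uniform coercivity bound directly.
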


\begin{proof}
Suppose that $\|u_n\|_\mu \to \infty$. Then
\begin{align*}
c_0 &= \lim_{n \to \infty} \cJ_n (u_n) = \lim_{n \to \infty} \left( \cJ_n (u_n) - \frac{1}{q} \cJ_n ' (u_n)(u_n) \right) \\
&= \lim_{n\to\infty} \left[ \left( \frac{1}{2} - \frac{1}{q} \right) \|u_n\|_\mu^2 + \frac{1}{q} \int_{\R^N} f(x,u_n)u_n - q F(x,u_n) \, dx \right] \\
&\geq  \lim_{n\to\infty} \left[ \left( \frac{1}{2} - \frac{1}{q} \right) \|u_n\|_\mu^2 \right] \to \infty
\end{align*}
- a contradiction.
\end{proof}

\begin{proof}[Proof of Theorem \ref{ThAsymptoticGamma}]
Let $\mu = 0$ and $V_{loc} \equiv 0$. We claim that there is a sequence $(y_n) \subset \mathbb{Z}^N$ such that
$$
\liminf_{n\to\infty} \int_{B(y_n, 1+\sqrt{N})} |u_n|^2\, dx > 0.
$$
Indeed, suppose that it is not true. From the fractional Lion's lemma we have
$$
u_n \to 0 \ \mathrm{in} \ L^t (\R^N) \ \mathrm{for} \ 2 < t < 2^*_\alpha.
$$
Since $u_n \in \cN_n$, we have
\begin{align*}
\|u_n\|^2 &= \int_{\R^N} f(x,u_n)u_n \, dx - \int_{\R^N} K_n(x) |u_n|^q \, dx \\ &\leq \varepsilon |u_n|_2^2 + C_\varepsilon |u_n|_p^p - \int_{\R^N} K_n(x) |u_n|^q \, dx \\ &\to \varepsilon \limsup_{n\to\infty} |u_n|_2^2
\end{align*}
and taking $\varepsilon \to 0^+$ we obtain $\|u_n\| \to 0$, and therefore $u_n \to 0$ in $H^{\alpha / 2}$. However, in view of Lemma \ref{Lem51} we have
$$
\cJ_n (u_n) \geq \cJ_n \left( r \cdot \frac{u_n}{\|u_n\|} \right) \geq a > 0
$$
and on the other hand
\begin{align*}
\limsup_{n\to\infty} \cJ_n (u_n) = -\limsup_{n \to \infty} \int_{\R^N} F(x,u_n) \, dx \leq 0
\end{align*}
- a contradiction. Thus
$$
\liminf_{n\to\infty} \int_{B(y_n, 1+\sqrt{N})} |u_n|^2\, dx > 0
$$
for some $(y_n) \subset \mathbb{Z}^N$. Then, in view of Lemma \ref{Lem53}, (up to a subsequence) there is $u \neq 0$ such that
\begin{align*}
u_{n} (\cdot + y_{n}) \to u \quad &\mathrm{in} \ L^2_{loc} (\R^N), \\
u_{n} (\cdot + y_{n}) \weakto u \quad &\mathrm{in} \ H^{\alpha / 2} (\R^N), \\
u_{n} (x+y_{n}) \to u(x) \quad &\mathrm{for} \ \mathrm{a.e.} \ x \in \R^N.
\end{align*}
Denote $w_n := u_{n}(\cdot + y_{n})$. Take any $\psi \in \cC^\infty_0 (\R^N)$. We have
$$
\cJ_0' \left( w_{n} \right) (\psi) = \cJ_{n}' (u_{n}) \left( \psi (\cdot - y_{n}) \right) -  \int_{\R^N} K_n (x) |u_{n}|^{q-2} u_n \psi(\cdot - y_{n}) \, dx = -  \int_{\R^N} K_n (x) |u_{n}|^{q-2} u_n \psi(\cdot - y_{n}) \, dx.
$$
Moreover
$$
\left| \int_{\R^N} K_n (x) |u_{n}|^{q-2} u_{n} \psi(\cdot - y_{n}) \, dx \right| \leq | K_n |_\infty  \int_{\R^N} |u_{n}|^{q-1} | \psi(\cdot - y_{n}) | \, dx \leq |K_n|_\infty |w_{n}|_{q}^{q-1} | \psi |_{q}.
$$
Therefore
$$
| \cJ_0' \left( w_{n} \right) (\psi) | \leq |K_n|_\infty |w_{n}|_{q}^{q-1} | \psi |_{q}
$$
The sequence $(w_{n})$ is bounded in $H^{\alpha / 2} (\R^N)$ (see Lemma \ref{Lem53}), which implies boundedness in $L^q (\R^N)$. Hence $\cJ_0' \left( w_{n} \right) (\psi) \to 0$. 

\noindent On the other hand - we have
$$
\cJ_0' (w_{n})(\psi) = \langle w_n, \psi \rangle - \int_{\supp \psi} f(x,w_n) \psi \, dx + \int_{\supp \psi} K_n(x) |w_n|^{q-2} w_n \psi \, dx.
$$
In view of the weak convergence $w_n \rightharpoonup u$ we have $\langle w_n, \psi \rangle \to 0$. Take any measurable set $E \subset \supp \psi$ and note that
$$
\int_E | f(x, w_n) \psi | \, dx \leq C \int_E |w_n \psi| + |w_n^{p-1} \psi| \, dx \leq C \left( | w_n |_{2} |\psi \chi_E |_{2} + |w_n|_{p}^{p-1} | \psi \chi_E |_{p} \right).
$$
Hence, for every $\varepsilon > 0$ there is $\delta > 0$ such that for $|E| < \delta$
$$
\int_E | f(x, w_n) \psi | \, dx < \varepsilon.
$$
Thus, in view of the Vitali convergence theorem
$$
\int_{\supp \psi} f(x,w_n) \psi \, dx \to \int_{\supp \psi} f(x,u) \psi \, dx.
$$
Similarly
$$
\int_{\supp \psi} |w_n^{q-1}  \psi | \, dx \leq |w_n|_{q}^{q-1} | \psi \chi_E |_{q} < \varepsilon
$$
for sufficiently small $\delta > 0$ and $|E| < \delta$. Hence
$$
\int_{\supp \psi} |w_n^{q-2}| w_n  \psi  \, dx \to \int_{\supp \psi} |u^{q-2}| u  \psi  \, dx.
$$
Thus
$$
\cJ_0' (w_{n})(\psi) \to \cJ_0'(u)(\psi).
$$
Therefore $\cJ_0'(u)(\psi) = 0$ and $u$ is the critical point of $\cJ_0$. Put
$$
c_0 := \inf_{\cN_0} \cJ_0, \quad c_n := \inf_{\cN_{n}} \cJ_{n} = \cJ_{n} (u_n).
$$
In view of Lemma \ref{Lem52}, we have $c_n \to c_0$ as $n \to \infty$, i.e. $\cJ_{n} (u_n) \to c_0$. The Fatou's lemma gives
\begin{eqnarray} \label{c}
& & c_0 = \liminf_{n\to\infty} \cJ_{n} (u_{n}) = \liminf_{n\to\infty} \left( \cJ_{n} (u_{n}) - \frac{1}{2} \cJ_{n}' (u_{n})(u_{n}) \right) \\
\nonumber &=& \liminf_{n\to\infty} \left[ \frac{1}{2} \int_{\R^N}  f(x,w_{n})w_{n} - 2F(x, w_{n}) \, dx - \left( \frac{1}{2} - \frac{1}{q} \right)  \int_{\R^N} K_n (x) |u_n|^q \, dx \right] \\
\nonumber &\geq& \frac{1}{2} \liminf_{n\to\infty} \left[ \int_{\R^N}  f(x,w_{n})w_{n} - 2F(x, w_{n}) \, dx \right] + \liminf_{n\to\infty} \left[ - \left( \frac{1}{2} - \frac{1}{q} \right) \int_{\R^N} K_n (x) |u_n|^q \, dx \right] \\
\nonumber &=& \frac{1}{2} \liminf_{n\to\infty} \left[ \int_{\R^N}  f(x,w_{n})w_{n} - 2F(x, w_{n}) \, dx \right] \geq  \frac{1}{2} \int_{\R^N} [ f(x,u)u - 2 F(x,u) ] \, dx \\
\nonumber &=& \int_{\R^N} \left[ \frac{1}{2} f(x,u)u -  F(x,u) \right] \, dx + \frac{1}{2} \cJ_0 '(u)(u) = \cJ_0 (u) \geq c_0.
\end{eqnarray}
Thus $u$ is a ground state for $\cJ_0$, i.e. $\cJ_0(u) = c_0$.

\noindent Now we are going to show that $u_n \to w$ in $H^{\alpha / 2}(\R^N)$. We have
\begin{eqnarray*}
\| w_n - u \|^2 &=& \cJ_{n}'(u_n) \left[ w_n (\cdot - y_n) - u (\cdot - y_n) \right] - \langle u, w_n - u \rangle \\
&-& \int_{\R^N} f(x,w_n)(w_n-u) \, dx +  \int_{\R^N} K_n (x) |w_n|^{q-2} w_n [w_n-u] \, dx
\end{eqnarray*}
By the weak convergence, we have $\langle u, w_n - u \rangle \to 0$. Moreover
$$
\cJ_{n}'(u_n) \left[ w_n (\cdot - y_n) - u (\cdot - y_n) \right] = \cJ_{n}'(u_n) (u_n) - \cJ_{n}'(u_n) \left( u (\cdot - y_n) \right) = - \cJ_{n}'(u_n) \left( u (\cdot - y_n) \right)
$$
and
$$
\cJ_{n}'(u_n) \left( u (\cdot - y_n) \right)  = 0.
$$
Therefore
$$
\| w_n - u \|^2 = - \int_{\R^N} f(x,w_n)(w_n-u) \, dx +  \int_{\R^N} K_n(x) |w_n|^{q-2} w_n [w_n-u] \, dx + o(1).
$$
Since $c_n \to c_0$, we have that $c_n = \cJ_{n} (u_n)$ is bounded. Moreover $(w_n)$ is bounded. Put
$$
G(x,u) := \frac{1}{2} f(x,u)u - F(x,u) \geq 0.
$$
Note that 
\begin{eqnarray*}
\int_{\R^N} G(x,w_n) - G(x,w_n - u) \, dx &=& \int_{\R^N} \int_0^1 \frac{d}{dt} G(x, w_n-u+tu) \, dt \, dx \\
&=& \int_0^1 \int_{\R^N} g(x, w_n-u+tu)u \, dx \, dt,
\end{eqnarray*}
where $g(x,u) := \frac{\partial}{\partial u} G(x,u)$. Observe that
$$
g(x,u) = \frac{1}{2} f_u ' (x,u) u + \frac{1}{2} f(x,u) - f(x,u) = \frac{1}{2} f_u ' (x,u) u - \frac{1}{2} f(x,u).
$$
From (F5) we see that
\begin{align}\label{ep}
|f_u ' (x,w_n-u+tu) (w_n-u+tu) | \leq c( |w_n-u+tu| +  |w_n-u+tu|^{p-1} ).
\end{align}
Since $(w_n-u+tu)_n$ is bounded in $H^{\alpha / 2} (\R^N)$, taking (F1), \eqref{ep} and H\"older inequality into account we see that for every $\varepsilon > 0$ there is $\delta > 0$ such that
$$
\int_E |g(x,w_n-u+tu)u| \, dx < \varepsilon
$$
for every $n$ and every measurable subset $E \subset \R^N$ such that $|E| < \delta$. Therefore $(g(x,w_n-u+tu)u)_n$ is uniformly integrable. Moreover for any $\varepsilon > 0$ there is a measurable subset $E \subset \R^N$ of finite measure $|E| < \infty$, such that for any $n \geq 1$ 
$$
\int_{\R^N \setminus E} |g(x, w_n-u+tu)u| \, dx < \varepsilon.
$$
Thus $(g(x, w_n-u+tu)u)_n$ is tight on $\R^N$. Hence, in view of the Vitali convergence theorem
$$
\int_{\R^N} g(x, w_n-u+tu)u \, dx \to \int_{\R^N} g(x, tu)u \, dx.
$$
Hence
\begin{eqnarray*}
\int_{\R^N} G(x,w_n) - G(x,w_n - u) \, dx &\to& \int_0^1 \int_{\R^N} g(x, tu)u \, dx \, dt \\
&=& \int_{\R^N} \int_0^1 g(x, tu)u \, dt \, dx = \int_{\R^N} G(x,u) \, dx.
\end{eqnarray*}
Recall that, in view of (\ref{c})
$$
c_0 = \lim_{n\to\infty} \left[ \int_{\R^N} G(x,w_n) \, dx \right] =  \int_{\R^N} G(x,u) \, dx.
$$
Hence
\begin{eqnarray*}
\int_{\R^N} G(x, w_n -u) \, dx \to 0.
\end{eqnarray*}
By (F5) we have 
$$
| w_n - u |_{r}^r = \int_{\R^N} |w_n - u|^r \, dx \leq \frac{2}{b} \int_{\R^N} G(x, w_n -u) \, dx \to 0.
$$
Hence $w_n \to u$ in $L^r (\R^N)$. From the continuous embedding $H^{\alpha / 2} (\R^N) \subset L^t (\R^N)$ for $t \in [1,2^*_\alpha]$, we know that $(w_n)$ is bounded in $L^t (\R^N)$ for every $1 \leq t \leq 2^*_\alpha$. In particular, $(w_n)$ is bounded in $L^2(\R^N)$ and in $L^{2^*_\alpha} (\R^N)$, so in view of H\"older inequality $w_n \to u$ in every $L^t(\R^N)$ for $t \in (2,2^*)$. 
Note that for every $\delta > 0$ there is $C_\delta > 0$ such that
\begin{eqnarray*}
& & \left| \int_{\R^N} f(x,w_n) (w_n - u) \, dx \right| \leq \delta \int_{\R^N} |w_n| |w_n-u| \, dx + C_\delta \int_{\R^N} |w_n|^{p-1} |w_n-u| \, dx \\
&\leq& \delta |w_n|_{2} | w_n - u|_{2} + C_\delta |w_n|_{p}^{p-1} |w_n - u|_{p} \\
&\to& \delta \limsup_{n\to\infty} \left( |w_n|_{2} |w_n-u|_{2} \right).
\end{eqnarray*}
Taking $\delta \to 0^+$ we obtain
$$
\left| \int_{\R^N} f(x,w_n) (w_n - u) \, dx \right| \to 0.
$$
Moreover
\begin{eqnarray*}
\left| \int_{\R^N} K_n (x) |w_n|^{q-2} w_n [w_n-u] \, dx \right| &\leq& |K_n|_\infty \int_{\R^N} |w_n|^{q-1} |w_n-u| \, dx \\
&\leq& |K_n|_\infty | w_n |^{q-1}_{q} | w_n - u |_{q} \to 0,
\end{eqnarray*}
since $(w_n)$ is bounded in $L^q (\R^N)$. Therefore
$$
\| w_n - u \|^2 = - \int_{\R^N} f(x,w_n)(w_n-u) \, dx +  \int_{\R^N} K_n(x) |w_n|^{q-2} w_n [w_n-u] \, dx + o(1) \to 0.
$$

\end{proof}

\section{Asymptotic behaviour of ground states as $\mu \to 0$}
\label{sect:AsymptoticMu}

\begin{proof}[Proof of Theorem \ref{ThAsymptoticMu}]
The proof is similar to proof of Theorem \ref{ThAsymptoticGamma} and \cite{GuoMederski}[Theorem 1.2], hence we provide only a sketch of the reasoning. Let $\cJ_n$ be the energy functional with $\mu = \mu_n$. For any $n \geq 1$ there is ground state $u_n$ in the corresponding Nehari manifold $\cN_n$. We also denote by $\cJ_0$ the energy with $\mu = 0$ and by $u_0 \in \cN_0$ the ground state for $\cJ_0$ in the corresponding Nehari manifold $\cN_0$. Similarly as in \cite{GuoMederski} we show that 
$$
\inf_{n \geq 1} \inf_{\|u\| = r} \cJ_n (u) > 0.
$$
Then we provide the following inequality
$$
c_n := \cJ_n (u_n) \geq \cJ_0 (u_0) + \frac{\mu_n}{2} \int_{\R^N} \frac{|t_n u_0|^2}{|x|^{\alpha}} \, dx \geq c_n + \frac{\mu_n}{2} \int_{\R^N} \frac{|t_n u_0|^2}{|x|^{\alpha}} \, dx,
$$
where $t_n > 0$ is such that $t_n u_n \in \cN_0$. Again, as in \cite{GuoMederski}, using the fractional Hardy inequality (Lemma \ref{Lem:HardyIneq}) we show that
$$
\frac{\mu_n}{2} \int_{\R^N} \frac{|t_n u_0|^2}{|x|^{\alpha}} \, dx \to 0.
$$
Hence $c_n \to c_0 := \cJ_0 (u_0)$. As in proof of Theorem \ref{ThAsymptoticGamma} we show that there is a sequence $(y_n) \subset \mathbb{Z}^N$ such that
$$
\liminf_{n\to\infty} \int_{B(y_n, 1+\sqrt{N})} |u_n|^2 \, dx > 0
$$
and
\begin{align*}
u_{n} (\cdot + y_{n}) \to u \quad &\mathrm{in} \ L^2_{loc} (\R^N), \\
u_{n} (\cdot + y_{n}) \weakto u \quad &\mathrm{in} \ H^{\alpha / 2} (\R^N), \\
u_{n} (x+y_{n}) \to u(x) \quad &\mathrm{for} \ \mathrm{a.e.} \ x \in \R^N.
\end{align*}
Using the fractional Hardy inequality and Lemma \ref{hardyLemma} we can repeat the reasoning from the proof of Theorem \ref{ThAsymptoticGamma} and show that $u_n (\cdot + y_n) \to u$ in $H^{\alpha / 2} (\R^N)$ and $u$ is a ground state for $\cJ_0$.
\end{proof}

\vspace{0.3cm}
\noindent
\textbf{Acknowledgement.} The author was partially supported by the National Science Centre, Poland (Grant No. 2017/25/N/ST1/00531).

\section*{References}

\end{document}